\tikzset{->-/.style={decoration={
  markings,
  mark=at position 0.6 with {\arrow{Latex[length=7pt]}}}, postaction={decorate}}}
\newtheorem{thm}{Theorem}[section]
\newtheorem{prop}[thm]{Proposition}
\newtheorem{cor}[thm]{Corollary}
\newtheorem{lem}[thm]{Lemma}
\newtheorem{conj}[thm]{Conjecture}
\newtheorem{exa}[thm]{Example}
\newcommand{\ora}{\overrightarrow}
\newcommand{\ola}{\overleftarrow}
\newcommand{\symmdiff}{\bigtriangleup}
\DeclareMathOperator{\Fix}{Fix}
\newcommand{\ben}{\begin{enumerate}}
\newcommand{\een}{\end{enumerate}}
\newcommand{\ble}{\begin{lem}}
\newcommand{\ele}{\end{lem}}
\newcommand{\bth}{\begin{thm}}
\renewcommand{\eth}{\end{thm}}
\newcommand{\bpr}{\begin{prop}}
\newcommand{\epr}{\end{prop}}
\newcommand{\bco}{\begin{cor}}
\newcommand{\eco}{\end{cor}}
\newcommand{\bcon}{\begin{conj}}
\newcommand{\econ}{\end{conj}}
\newcommand{\bde}{\begin{defn}}
\newcommand{\ede}{\end{defn}}
\newcommand{\bex}{\begin{exa}}
\newcommand{\eex}{\end{exa}}
\newcommand{\barr}{\begin{array}}
\newcommand{\earr}{\end{array}}
\newcommand{\btab}{\begin{tabular}}
\newcommand{\etab}{\end{tabular}}
\newcommand{\beq}{\begin{equation}}
\newcommand{\eeq}{\end{equation}}
\newcommand{\bea}{\begin{eqnarray*}}
\newcommand{\eea}{\end{eqnarray*}}
\newcommand{\bal}{\begin{align*}}
\newcommand{\bce}{\begin{center}}
\newcommand{\ece}{\end{center}}
\newcommand{\bpi}{\begin{picture}}
\newcommand{\epi}{\end{picture}}
\newcommand{\bpp}{\begin{picture}}
\newcommand{\epp}{\end{picture}}
\newcommand{\bfi}{\begin{figure} \begin{center}}
\newcommand{\efi}{\end{center} \end{figure}}
\newcommand{\bprf}{\begin{proof}}
\newcommand{\eprf}{\end{proof}\medskip}
\newcommand{\bsl}{\begin{slide}{}}
\newcommand{\esl}{\end{slide}}
\newcommand{\bfr}{\begin{frame}}
\newcommand{\efr}{\end{frame}}
\newcommand{\hso}[1]{\hspace{-1pt}}
\newcommand{\emp}{\emptyset}
\def\<{\langle}
\def\>{\rangle}
\newcommand{\ra}{\rightarrow}
\newcommand{\io}{\iota}
\newcommand{\ka}{\kappa}
\newcommand{\la}{\lambda}
\newcommand{\om}{\omega}
\newcommand{\bx}{{\bf x}}
\newcommand{\cM}{{\cal M}}
\newcommand{\cO}{{\cal O}}
\newcommand{\cS}{{\cal S}}
\DeclareMathOperator{\sgn}{sgn}
\DeclareMathOperator{\wt}{wt}
\begin{document}
\pagestyle{plain}
\title{Bijective proofs of proper coloring theorems
}
\author{
Bruce E. Sagan\\[-3pt]
\small Department of Mathematics, Michigan State University,\\[-5pt]
\small East Lansing, MI 48824, USA, {\tt sagan@math.msu.edu}\\[10pt]
Vincent Vatter\footnote{Vatter's research was partially supported by the Simons Foundation via award number 636113.}\\[-3pt]
\small Department of Mathematics, University of Florida,\\[-5pt]
\small Gainesville, FL  32601, USA, {\tt vatter@ufl.edu}
}

\date{\today\\[10pt]
	\begin{flushleft}
	\small Key Words:  acyclic orientation, broken circuit, chromatic polynomial, chromatic symmetric function, proper coloring\\[5pt]
	\small AMS subject classification (2010):  05C31, 05E05 (Primary) 05C15 (Secondary)
	\end{flushleft}}

\maketitle

\begin{abstract}
The chromatic polynomial and its generalization, the chromatic symmetric function, are two important graph invariants. Celebrated theorems of Birkhoff, Whitney, and Stanley show how both objects can be expressed in three different ways: as sums over all spanning subgraphs, as sums over spanning subgraphs with no broken circuits, and in terms of acyclic orientations with compatible colorings. We establish all six of these expressions bijectively.  In fact, we do this with only two bijections, as the proofs in the symmetric function setting are obtained using the same bijections as in the polynomial case and the bijection for broken circuits is just a restriction of the one for all spanning subgraphs.
\end{abstract}

\section{Introduction.}

Birkhoff first defined the chromatic polynomial in 1912 for planar graphs~\cite{bir:dfn} with the (ultimately unsuccessful) intention of proving what is now the four color theorem. Indeed, in a 1946 survey on the chromatic polynomial, Birkhoff and Lewis divided attacks on this then-conjecture into two types, qualitative (Type 1) and quantitive (Type 2), placing the study of chromatic polynomials at the center of the efforts of Type 2. Even then, the outlook was not rosy, as they admitted~\cite[p.~357]{bl:cp} (emphasis in original):
\begin{quote}
These researches have not been as successful as the researches of Type 1 in yielding results that are directly connected with the four-color problem. It is certain that the greater generality of the problem here considered has introduced complications which have \emph{so far} rendered the solution of the classical four-color problem more remote  by the methods characteristic of Type 2 than it is by the methods of Type 1.
\end{quote}

Despite its failure in leading to a proof of the four color theorem, the chromatic polynomial has nevertheless become a fundamental object in graph theory. We refer to Read's still-excellent 1968 survey~\cite{rea:icp} or the second part of Biggs's \emph{Algebraic Graph Theory}~\cite{big:agt} for more background on the chromatic polynomial and its importance. Our aim here is to explain, via simple bijections, three classic expressions for the chromatic polynomial---as a sum over spanning subgraphs, as a sum over spanning subgraphs without broken circuits, and in terms of acyclic orientations together with compatible colorings. In each case, we show how these bijections can also be used to prove analogous results for the chromatic symmetric function, a generalization of the chromatic polynomial defined by Stanley~\cite{sta:sfg} in 1995. First, though, we need definitions.

Let $G=(V,E)$ be a graph with vertices $V$ and edges $E$. A \emph{coloring} of $G$ is a function $\ka$ from $V$ to a set of \emph{colors}. Here our colors are exclusively the positive integers $\mathbb{P}$ and initial segments thereof, for which we use the notation $[t]=\{1,2,\dots,t\}$. Thus a \emph{$\mathbb{P}$-coloring} of $G$ is a function $\ka:V\to\mathbb{P}$ and a \emph{$[t]$-coloring} is a function $\ka:V\to[t]$. An example of a graph is shown on the left of Figure~\ref{gc} and two $[4]$-colorings of it are shown in the center.

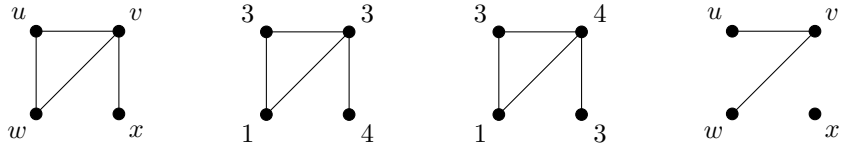
\begin{figure}
\begin{center}
\begin{footnotesize}
	\begin{tikzpicture}[scale=1.1]
	%\draw (-.8,.5) node {$G=$};
	\draw (0,0)--(1,1)--(0,1)--(0,0) (1,0)--(1,1);
	\draw [fill=black] (0,1) circle (2pt) node [above left, circle, inner sep=4pt] {$u$};
	\draw [fill=black] (1,1) circle (2pt) node [above right, circle, inner sep=4pt] {$v$};
	\draw [fill=black] (0,0) circle (2pt) node [below left, circle, inner sep=4pt] {$w$};
	\draw [fill=black] (1,0) circle (2pt) node [below right, circle, inner sep=4pt] {$x$};
	\end{tikzpicture}
\qquad
	\begin{tikzpicture}[scale=1.1]
	%\draw(-.8,.5) node{$\ka=$};
	\draw (0,0)--(1,1)--(0,1)--(0,0) (1,0)--(1,1);
	\draw [fill=black] (0,1) circle (2pt) node [above left, circle, inner sep=4pt] {$3$};
	\draw [fill=black] (1,1) circle (2pt) node [above right, circle, inner sep=4pt] {$3$};
	\draw [fill=black] (0,0) circle (2pt) node [below left, circle, inner sep=4pt] {$1$};
	\draw [fill=black] (1,0) circle (2pt) node [below right, circle, inner sep=4pt] {$4$};
	\end{tikzpicture}
\qquad
	\begin{tikzpicture}[scale=1.1]
	%\draw(-.8,.5) node{$\ka=$};
	\draw (0,0)--(1,1)--(0,1)--(0,0) (1,0)--(1,1);
	\draw [fill=black] (0,1) circle (2pt) node [above left, circle, inner sep=4pt] {$3$};
	\draw [fill=black] (1,1) circle (2pt) node [above right, circle, inner sep=4pt] {$4$};
	\draw [fill=black] (0,0) circle (2pt) node [below left, circle, inner sep=4pt] {$1$};
	\draw [fill=black] (1,0) circle (2pt) node [below right, circle, inner sep=4pt] {$3$};
	\end{tikzpicture}
\qquad
	\begin{tikzpicture}[scale=1.1]
	%\draw(-.8,.5) node{$H=$};
	\draw (0,0)--(1,1)--(0,1);
	\draw [fill=black] (0,1) circle (2pt) node [above left, circle, inner sep=4pt] {$u$};
	\draw [fill=black] (1,1) circle (2pt) node [above right, circle, inner sep=4pt] {$v$};
	\draw [fill=black] (0,0) circle (2pt) node [below left, circle, inner sep=4pt] {$w$};
	\draw [fill=black] (1,0) circle (2pt) node [below right, circle, inner sep=4pt] {$x$};
	\end{tikzpicture}
	\caption{A graph, two colorings, and a spanning subgraph.}
	\label{gc}
\end{footnotesize}
\end{center}
\end{figure}

The edge $e=uv\in E$ is \emph{monochromatic} in the coloring $\ka$ if $\ka(u)=\ka(v)$, and a coloring is \emph{proper} if none of its edges are monochromatic. Note that only the second coloring shown in Figure~\ref{gc} is proper. Indeed, every proper coloring of the graph shown on the left of Figure~\ref{gc} must give distinct colors to the vertices $u$, $v$, and $w$, but $x$ may be given either its own distinct color or it may share a color with $u$ or $w$. Thus the number of $[t]$-colorings of this graph is
\[
	t(t-1)^2(t-2),
\]
because there are $t(t-1)(t-2)$ ways to choose the colors of $u$, $v$, and $w$, and then there are $t-1$ choices for the color of $x$ (it may have any color except that given to $v$).

The quantity above is a polynomial in $t$. It turns out that this is always the case. In fact, this was the main result of Birkhoff's seminal paper~\cite{bir:dfn}, and he established it by proving the result we present as Theorem~\ref{PAllS}. Thus we define the \emph{chromatic polynomial} of the graph $G$ by
\[
	\chi(G;t)=\text{the number of proper $[t]$-colorings of $G$}
\]
for all positive integers $t$.

A \emph{spanning subgraph} of a graph $G=(V,E)$ is a graph on the same set of vertices, but with a subset of the edges. For example, the graph on the right in Figure~\ref{gc} is a spanning subgraph of the graph on the left.
We associate each subset $S\subseteq E$ of edges with the spanning subgraph of $G$ with edge set $S$, so the spanning subgraph on the right of Figure~\ref{gc} is denoted by $S=\{uv, vw\}$.
A \emph{connected component}, or simply \emph{component}, $K$ of a spanning subgraph $S$ is a maximal subset of vertices such that one can reach any vertex in $K$ from any other by traveling vertex-to-vertex via the edges of $S$.
The components of any graph (or spanning subgraph) form a partition of its vertices.
Returning to the spanning subgraph on the right of Figure~\ref{gc}, we see that it has two components, one with vertex set $\{u,v,w\}$ and the other consisting of the single vertex $x$.
We let
\[
	c(S) =\text{the number of components of $S$}.
\]

In our proofs we frequently consider certain \emph{improper} colorings. Given a spanning subgraph $S\subseteq E$ and a coloring $\ka$, we say that $\ka$ is \emph{monochromatic on the components of $S$} if $\ka(u)=\ka(v)$ for all vertices $u$ and $v$ in the same component of $S$.
These are precisely the colorings such that $\ka(u)=\ka(v)$ for all $uv\in S$.
If $\ka$ is a proper coloring, it follows that $\ka$ can only be monochromatic on the components of $S$ if $S=\emp$. The following lemma, while trivial to prove, is extremely useful in what follows.

\begin{lem}
\label{IM(S;[t])}
For every graph $G=(V,E)$ and spanning subgraph $S\subseteq E$, the number of $[t]$-colorings of $G$ that are monochromatic on the components of $S$ is $t^{c(S)}$.
\end{lem}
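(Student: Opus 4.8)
The plan is to show that a $[t]$-coloring $\ka$ which is monochromatic on the components of $S$ is determined by, and freely determined by, its values on the components of $S$. Concretely, let $K_1,\dots,K_{c(S)}$ be the connected components of the spanning subgraph $S$; as noted in the excerpt, these partition $V$. I will exhibit a bijection between the set of $[t]$-colorings of $G$ that are monochromatic on the components of $S$ and the set of functions $[c(S)]\to[t]$, the latter of which plainly has cardinality $t^{c(S)}$.

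First I would define the forward map: given such a coloring $\ka$, send it to the function $f$ with $f(i)=\ka(v)$ for any $v\in K_i$. This is well-defined precisely because $\ka$ is monochromatic on $K_i$, so the choice of representative $v$ does not matter. Next I would define the inverse map: given any $f\colon[c(S)]\to[t]$, define a coloring $\ka$ by setting $\ka(v)=f(i)$ for the unique index $i$ with $v\in K_i$ (unique because the components partition $V$). This $\ka$ is manifestly monochromatic on each component. Then I would check the two composites are identities, which is immediate from the definitions. Since there are exactly $t^{c(S)}$ functions from a $c(S)$-element set to a $t$-element set, the count follows.

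I do not expect any genuine obstacle here; the lemma is, as the authors say, trivial. The only point requiring a word of care is the well-definedness of the forward map, i.e.\ that "monochromatic on the components of $S$" really does mean $\ka$ is constant on each $K_i$ rather than merely constant along edges of $S$ — but the excerpt has already recorded the equivalence of these two formulations, so I may simply invoke it. One could alternatively phrase the argument as a product: the number of colorings is $\prod_{i=1}^{c(S)} t = t^{c(S)}$, since each component can independently be assigned any one of the $t$ colors and the assignments on distinct components are unconstrained by the monochromaticity condition. Either presentation is a couple of lines.
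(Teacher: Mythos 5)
Your proof is correct and is essentially the paper's argument: the paper's one-line proof says precisely that each component independently receives one of the $t$ colors, which is the product formulation you give at the end, and your bijection with functions $[c(S)]\to[t]$ is just a more formal dressing of the same idea.
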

\begin{proof}
Every such coloring is obtained by independently choosing one of the $t$ available colors for each component of $S$.
\end{proof}

Since some our results involve signs, it is natural to use sign-reversing involutions to prove them. Let $\cS$ be a finite set with an associated sign function $\sgn:\cS\to\{+1,-1\}$. A \emph{sign-reversing involution} on $\cS$ is an involution $\io:\cS\ra\cS$ that satisfies the following two conditions.
\begin{itemize}
	\item If $\io(s)=s$, then $\sgn(s) = +1$.
	\item If $\io(s)=t$ and $s\neq t$, then $\sgn(s) = -\sgn(t)$.
\end{itemize}
Immediately from the definition we see that
\[
	\sum_{s\in\cS}\sgn(s)
	=
	|\Fix\io|,
\]
where $\Fix\io=\{s\in\cS \mid \io(s)=s\}$ is the set of fixed points of $\io$, and the vertical bars denote cardinality.

In order to generalize the chromatic polynomial to the chromatic symmetric function, let $\bx=\{x_1,x_2,\dots\}$ be an infinite set of commuting variables indexed by the positive integers. To every $\mathbb{P}$-coloring of the graph $G=(V,E)$ we associate the monomial or \emph{weight}
\[
	\bx^\ka=\prod_{v\in V} x_{\ka(v)}.
\]
For example, both colorings in Figure~\ref{gc} have monomial $\bx^\ka=x_1 x_3 x_3 x_4 = x_1 x_3^2 x_4$. The
\emph{chromatic symmetric function} of $G$ is then
\[
	X(G;\bx)=\sum_{\ka} \bx^\ka,
\]
where the sum is over all proper $\mathbb{P}$-colorings of $G$. That $X(G;\bx)$ is a symmetric function follows from the observation that permuting the labels of the colors in a proper coloring results in another proper coloring.
%For our purposes, however, the fact that $X(G;\bx)$ is a symmetric function is not very important, and the reader more comfortable with generating functions than symmetric functions will lose nothing by viewing $X(G;\bx)$ as the weight generating function for proper $\mathbb{P}$-colorings of $G$.

By making the substitution
\[
	x_i
	=
	\left\{\begin{array}{ll}
	1&\text{if $i\in[t]$,}\\
	0&\text{if $i\notin[t]$,}
	\end{array}\right.
\]
all terms of $X(G;\bx)$ vanish except those corresponding to proper $[t]$-colorings of $G$, which become equal to $1$. Thus $X(G;\bx)$ reduces to $\chi(G;t)$ under this substitution, so the chromatic symmetric function is indeed a generalization of the chromatic polynomial.

To describe the analogue of Lemma~\ref{IM(S;[t])} in this context, we must introduce a particular basis for the ring of symmetric functions, the \emph{power sum symmetric functions}. All bases of the ring of symmetric functions can be indexed by integer partitions, and these are no exception. Suppose that $\la=(\la_1,\dots,\la_\ell)$ is a partition of the positive integer $n$, meaning that the $\la_i$ are positive integers called {\em parts} satisfying $\la_1+\cdots+\la_\ell=n$ and $\la_1\ge\cdots\ge\la_\ell$. We define $p_\la=p_\la(\bx)$ as the product
\[
	p_\la = p_{\la_1}\, p_{\la_2}\cdots p_{\la_\ell}
\]
where, for a positive integer $k$,
\[
	p_k = x_1^k + x_2^k + x_3^k + \cdots.
\]
For example,
\[
	p_{(3,1)}= p_3 p_1 = (x_1^3 + x_2^3 + x_3^3 +\cdots)(x_1 + x_2 + x_3 + \cdots).
\]

Finally, for a spanning subgraph $S\subseteq E$ of the graph $G=(V,E)$, we let $\la(S)$ denote the integer partition defined by listing the number of vertices in each component of $S$ in weakly decreasing order, so $\lambda(S)$ is a partition of $|V|$ into $c(S)$ parts.
For example, the spanning subgraph shown on the right of Figure~\ref{gc}  has $\la(S)=(3,1)$.
We then have the following analogue of Lemma~\ref{IM(S;[t])}.

\begin{lem}
\label{IM(S;x)}
For every graph $G=(V,E)$ and spanning subgraph $S\subseteq E$, we have
\[
	\sum_{\ka} \bx^\ka = p_{\lambda(S)},
\]
where the sum is over all $\mathbb{P}$-colorings of $G$ that are monochromatic on the components of $S$.
\end{lem}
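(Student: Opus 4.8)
The plan is to mimic the proof of Lemma~\ref{IM(S;[t])}, this time keeping track of weights rather than merely counting. The starting point is the same observation: a $\mathbb{P}$-coloring $\ka$ is monochromatic on the components of $S$ precisely when it is constant on each component, so such colorings are in bijection with the choices of one color from $\mathbb{P}$ for each component of $S$. Concretely, if the components of $S$ are $K_1,\dots,K_{c(S)}$, then the colorings in question correspond to tuples $(i_1,\dots,i_{c(S)})\in\mathbb{P}^{c(S)}$, where $i_j$ is the common color assigned to every vertex of $K_j$.

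Under this bijection, the coloring associated with $(i_1,\dots,i_{c(S)})$ has weight
\[
	\bx^\ka=\prod_{j=1}^{c(S)} x_{i_j}^{|K_j|},
\]
because each of the $|K_j|$ vertices in $K_j$ contributes the factor $x_{i_j}$. Summing the weight over all admissible colorings is therefore the same as summing over all such tuples, and this sum factors over the components:
\[
	\sum_{\ka}\bx^\ka
	=\sum_{(i_1,\dots,i_{c(S)})\in\mathbb{P}^{c(S)}}\ \prod_{j=1}^{c(S)} x_{i_j}^{|K_j|}
	=\prod_{j=1}^{c(S)}\Bigl(\sum_{i\in\mathbb{P}} x_i^{|K_j|}\Bigr)
	=\prod_{j=1}^{c(S)} p_{|K_j|}
	=p_{\la(S)},
\]
where the last equality holds because $p_{\la(S)}$ is, by definition, the product of the $p_{|K_j|}$ over the components of $S$, the order of the factors being irrelevant.

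I expect no real obstacle: the argument is just a weighted refinement of the counting in Lemma~\ref{IM(S;[t])}. The one point deserving a word of care is that the sum over colorings is infinite, so the identity should be understood in the ring of symmetric functions (equivalently, as an identity of formal power series in $\bx$), where the distributive expansion of the finite product above is legitimate. As a sanity check, applying the substitution $x_i=1$ for $i\in[t]$ and $x_i=0$ otherwise sends $p_{\la(S)}$ to $t^{c(S)}$ and recovers Lemma~\ref{IM(S;[t])}.
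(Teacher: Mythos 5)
Your argument is correct and is essentially the paper's own proof, just written out in more detail: both note that each component is colored independently with a single color, so the weighted sum factors into a product of $p_k$'s over the components, giving $p_{\la(S)}$. No issues.
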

\begin{proof}
The symmetric function for monochromatic colorings of a component with $k$ vertices is $p_k$. Since each component of $S$ may be colored independently, the product of the $p_k$ for all parts $k$ of $\la$ accounts for every coloring that is monochromatic on the components of $S$.
\end{proof}

We could have established Lemma~\ref{IM(S;x)} first and then derived Lemma~\ref{IM(S;[t])} from it by substitution as previously described. Indeed, throughout this article we present results in pairs like this where the first is a specialization of the second.
However, instead of establishing the first from the second by substitution, we establish the second by adapting the bijection used to prove the first.
These adaptations are quite minor.  The only differences in the symmetric function case are that the set of colorings is infinite, and that we have a weight function that associates a monomial with each coloring.
The cancellation we desire, which results in the equality
\[
	\sum_{s\in\cS} \sgn(s)\wt(s)
	=
	\sum_{s\in\Fix\io} \wt(s),
\]
still holds so long as the number of elements in $\cS$ of a given weight is finite and $\io$ is \emph{weight-preserving}, meaning that
$\wt(s)=\wt(\io(s))$ for all $s\in\cS$.

With this preparation complete, we are ready to construct our bijective proofs. In Section~\ref{sec-subgraphs}, we show how $\chi(G;t)$ and $X(G;\bx)$ can both be expressed as sums over all spanning subgraphs of $G$. Then in Section~\ref{sec-NBC}, we show (using a restriction of the bijection from Section~\ref{sec-subgraphs}) how they can both be expressed as sums over only those spanning subgraphs without broken circuits. Finally, in Sections~\ref{sec-acyclic} and \ref{sec-acyclic2} we give bijective proofs of Stanley's results linking $\chi(G;t)$ and $X(G;\bx)$ to acyclic orientations of graphs. It should be noted that these two sections are inspired by the work of Blass and Sagan~\cite{bs:bpt}. However, our approach differs from theirs in several respects.
First, we treat both the chromatic polynomial and the chromatic symmetric function while they only considered the former (the latter not having been defined when \cite{bs:bpt} was written).
Second, we give an explicit definition of the inverse of the bijection used in Section~\ref{sec-acyclic}, while they only described one direction and then showed it was one-to-one and onto.
Finally, we establish Stanley's theorem on acyclic orientations and compatible colorings for all negative values of $t$ in Section~\ref{sec-acyclic2}, whereas Blass and Sagan only considered the case of $t=-1$.

\section{Arbitrary spanning subgraphs.}
\label{sec-subgraphs}

Both $\chi(G;t)$ and $X(G;\bx)$ can be expressed as sums over the spanning subgraphs of $G$. In the case of $\chi(G;t)$, this expression is given by the following result of Birkhoff, from which it follows immediately that the chromatic polynomial is indeed a polynomial.

\begin{thm}
[Birkhoff~\cite{bir:dfn}]
\label{PAllS}
For every graph $G=(V,E)$ and every positive integer $t$, we have
\[
	\chi(G;t)
	=
	\sum_{S\subseteq E} (-1)^{|S|} t^{c(S)}.
\]
\end{thm}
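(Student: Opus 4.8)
The plan is to prove Theorem~\ref{PAllS} by a sign-reversing involution on the set of \emph{pairs} consisting of a coloring together with a ``bad'' edge set. Specifically, I would let
\[
	\cS = \{(\ka, S) \mid \ka \colon V \to [t],\ S \subseteq E,\ \ka \text{ is monochromatic on the components of } S\},
\]
and assign to $(\ka,S)$ the sign $\sgn(\ka,S) = (-1)^{|S|}$. The right-hand side of the claimed identity is then exactly $\sum_{(\ka,S)\in\cS}\sgn(\ka,S)$: for a \emph{fixed} $S$, Lemma~\ref{IM(S;[t])} says there are $t^{c(S)}$ colorings monochromatic on the components of $S$, so summing the sign $(-1)^{|S|}$ over all pairs with that second coordinate gives $(-1)^{|S|}t^{c(S)}$, and then summing over $S\subseteq E$ reproduces $\sum_{S\subseteq E}(-1)^{|S|}t^{c(S)}$.

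The involution $\io$ I would use: given $(\ka,S)$, look at the set $M$ of edges of $G$ that are monochromatic under $\ka$. Note first that every edge of $S$ lies in $M$, since $\ka$ is monochromatic on the components of $S$; so $S \subseteq M$. Fix once and for all a linear order on $E$, and let $e$ be the \emph{least} edge of $M$ (if $M$ is nonempty). Define $\io(\ka,S) = (\ka, S \symmdiff \{e\})$, that is, toggle whether $e$ belongs to $S$. I would need to check this lands back in $\cS$: adding or deleting a monochromatic edge $e\in M$ to/from $S$ keeps the coloring monochromatic on the components of the new subgraph, because toggling $e$ either merges two components whose vertices already share a color (both endpoints of $e$ are in $M$-classes, hence same color, since... ) or splits a component along $e$ — in either case monochromaticity on components is preserved precisely because $e\in M$. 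This is the one point to verify carefully: deleting $e$ from $S$ might disconnect a component, and one must argue the two resulting pieces are still each monochromatic, which holds because the whole old component was a single color. If $M=\emp$, we set $\io(\ka,S)=(\ka,S)$; but $M=\emp$ forces $S=\emp$, so the only fixed points are pairs $(\ka,\emp)$ with $\ka$ proper, and these have sign $+1$.

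Then $\io$ is clearly an involution (toggling $e$ twice returns the original, and $e$ is determined solely by $\ka$, which $\io$ does not change, so the same $e$ is chosen both times), it reverses sign on non-fixed pairs since $|S\symmdiff\{e\}| = |S|\pm 1$, and its fixed points are exactly the proper $[t]$-colorings of $G$ (paired with $\emp$). Hence $\sum_{(\ka,S)\in\cS}\sgn(\ka,S) = |\Fix\io| = \chi(G;t)$, which is the desired identity. I would close by remarking, as the paper's introduction promises, that Lemma~\ref{IM(S;x)} lets the identical involution prove the chromatic symmetric function analogue: the weight $\wt(\ka,S)=\bx^\ka$ is preserved by $\io$ (which fixes $\ka$), and finiteness of each weight class holds since a monomial of degree $|V|$ determines $\ka$ up to finitely many choices.

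The main obstacle is the verification that $\io$ maps $\cS$ into itself — i.e.\ that toggling the least monochromatic edge preserves the property ``$\ka$ is monochromatic on the components of $S$'' in both directions (edge addition and, more delicately, edge deletion). Everything else (involutivity, sign reversal, identification of fixed points, the reduction of the RHS via Lemma~\ref{IM(S;[t])}) is routine once that is in place.
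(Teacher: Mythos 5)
Your proposal is correct and is essentially the paper's own proof: the same set $\cS$ of pairs, the same sign $(-1)^{|S|}$, the same use of Lemma~\ref{IM(S;[t])} to interpret the right-hand side, and the same involution that toggles a canonically chosen monochromatic edge (determined by $\ka$ alone), with the same verification that toggling preserves monochromaticity on components and the same identification of the fixed points. The only difference is that you toggle the \emph{least} monochromatic edge while the paper toggles the \emph{last}; this is immaterial for Theorem~\ref{PAllS}, though the paper's choice is the one that later restricts cleanly to the broken-circuit argument of Section~\ref{sec-NBC}.
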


To give a bijective proof of this result, we start by giving a combinatorial interpretation to the right-hand side. If $S\subseteq E$ is a spanning subgraph of $G$, then Lemma~\ref{IM(S;[t])} shows the number of $[t]$-colorings of $V$ that are monochromatic on the components of $S$ is $t^{c(S)}$. Thus the right-hand side of Theorem~\ref{PAllS} can be written as
\[
	\sum_{(S,\,\ka)\in\cS} (-1)^{|S|},
\]
where $\cS$ is the set of pairs defined by
\[
	\cS
	=
	\{(S,\ka):\text{$S\subseteq E$ and $\ka:V\ra[t]$ is monochromatic on the components of $S$}\}.
\]
At this point the reader is welcome to derive Theorem~\ref{PAllS} by inclusion--exclusion (it won't be difficult), but to practice for our later proofs, we employ a sign-reversing involution $\io$.

For the rest of this proof---in fact, for the rest of the article---it is necessary to fix a total ordering on the edges of $G$. With this order fixed, we describe the edges of $G$ as being \emph{first}, \emph{last}, \emph{earlier}, or \emph{later}, referring to their positions in this order.
We also define a sign function on the pairs in $\cS$ by
\[
	\sgn(S,\ka)= (-1)^{|S|},
\]
so
\[
	\sum_{S\subseteq E} (-1)^{|S|} t^{c(S)}
	=
	\sum_{(S,\,\ka)\in\cS} \sgn(S,\ka).
\]

%The star of the rest of this section and the next is a sign-reversing involution $\io$.
We may now define $\io$, which is the star of the rest of this section and the next.
If $\ka$ is a proper coloring, we define $\io(S,\ka)=(S,\ka)$. As remarked earlier, this case can only occur if $S=\emptyset$, because otherwise $S$ must have at least one nontrivial component and thus the proper coloring $\ka$ cannot be monochromatic on its components.

Now suppose that $\ka$ has at least one monochromatic edge. We take $e$ to be the last such edge (in the ordering on the edges of $G$ fixed above) and define
\[
	\io(S,\ka)=(S\symmdiff e,\ka),
\]
where $\symmdiff$ is the symmetric difference operator, removing $e$ from $S$ if it is present and adding it to $S$ otherwise.

Our proof of Theorem~\ref{PAllS} is completed with the following proposition, which implies that
\[
	\sum_{S\subseteq E} (-1)^{|S|} t^{c(S)}
	=
	\sum_{(S,\,\ka)\in\cS} \sgn(S,\ka)
	=
	|\Fix\io|
	=
	\chi(G;t).
\]

\begin{prop}
\label{prop-io-sign-rev}
The mapping $\io$ is a sign-reversing involution on $\cS$ with
\[
	\Fix\io=\{(\emptyset,\ka)\in\cS \mid \text{$\ka$ is a proper coloring of $G$}\}.
\]
\end{prop}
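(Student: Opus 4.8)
The plan is to verify, one at a time, the four things bundled into the statement: that $\io$ maps $\cS$ into itself, that it is an involution, that it reverses signs, and that its fixed-point set is the one claimed. Throughout I would use the characterization recorded just before Lemma~\ref{IM(S;[t])}: a coloring is monochromatic on the components of a spanning subgraph $T$ exactly when $\ka(u)=\ka(v)$ for every edge $uv\in T$.

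First I would check that $\io$ is well defined, i.e.\ that $\io(S,\ka)\in\cS$. In the case where $\ka$ is proper there is nothing to do. Otherwise let $e$ be the last monochromatic edge of $\ka$. If $e\in S$ then $S\symmdiff e=S\setminus e\subseteq S$, and the monochromaticity condition for $S$ restricts immediately to the smaller edge set. If $e\notin S$ then $S\symmdiff e=S\cup\{e\}$; since $\ka$ already agrees on the endpoints of every edge of $S$, and $e$ is monochromatic, $\ka$ agrees on the endpoints of every edge of $S\cup\{e\}$. Either way $(S\symmdiff e,\ka)\in\cS$. Next, for the involution property, the key observation is that $\io$ never alters the coloring, only the edge set; hence the set of monochromatic edges of $\ka$—and in particular its last element $e$—is the same for $(S,\ka)$ and for $\io(S,\ka)=(S\symmdiff e,\ka)$. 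Applying $\io$ a second time therefore flips $e$ back, returning $(S\symmdiff e\symmdiff e,\ka)=(S,\ka)$; and since $S\symmdiff e\ne S$, in this case $\io$ genuinely moves the pair.

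For the sign-reversing property, if $\io(S,\ka)=(S,\ka)$ then $\ka$ must be proper, and—as already noted in the text—a proper coloring can be monochromatic on the components of $S$ only when $S=\emp$, so $\sgn(S,\ka)=(-1)^0=+1$. If instead $\io(S,\ka)=(S\symmdiff e,\ka)\ne(S,\ka)$, then $|S\symmdiff e|=|S|\pm1$, so $\sgn(S\symmdiff e,\ka)=-\sgn(S,\ka)$. This gives exactly the two defining conditions of a sign-reversing involution. Finally, for the fixed points: by construction $\io$ fixes $(S,\ka)$ precisely when $\ka$ is a proper coloring, and for such a pair to lie in $\cS$ we need $S=\emp$ by the remark just used; conversely every pair $(\emp,\ka)$ with $\ka$ proper does lie in $\cS$ (the components of $\emp$ are single vertices, so any coloring is monochromatic on them) and is fixed. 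Hence $\Fix\io=\{(\emp,\ka)\in\cS\mid \ka\text{ is a proper coloring of }G\}$.

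I do not expect a serious obstacle here; the one point that needs a moment's care is the well-definedness step, since it is the only place where the edge set actually changes and one must confirm the pair stays in $\cS$. Once the edgewise description of ``monochromatic on the components of $S$'' is in hand, that check—and, via the fact that $\io$ leaves the coloring untouched, the involution and sign claims—are all routine.
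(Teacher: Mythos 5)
Your proof is correct and follows essentially the same path as the paper's: the same four checks (closure under $\io$, involution, sign reversal, identification of the fixed points), with the involution and sign arguments word-for-word in spirit. The only difference is local: for well-definedness you invoke the edgewise reformulation ``monochromatic on the components of $S$ iff $\ka(u)=\ka(v)$ for all $uv\in S$'' (which the paper records just before Lemma~\ref{IM(S;[t])}), making the check immediate, whereas the paper argues directly with components (containment after deleting $e$, and merging of two same-colored components after adding $e$); both are valid and yours is marginally slicker at that step.
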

\begin{proof}
First we verify that $\io$ maps $\cS$ to $\cS$. This is clear if $\ka$ is a proper coloring, so we must show that if $(S,\ka)\in\cS$ where $\ka$ is not proper, then $(S\symmdiff e,\ka)\in\cS$, i.e., $\kappa$ is still monochromatic on the components of $S\symmdiff e$. If $S\symmdiff e=S-e$, then this holds because each component of $S-e$ is contained in a component of $S$. If $S\symmdiff e=S\cup e$, then either $e$ connects two vertices of a component of $S$, or two components of $S$ were merged by the addition of the edge $e$. In the former case, $\kappa$ is clearly still monochromatic on components.  In the latter case, since $\ka$ is monochromatic on $e$, $\ka$ must have been the same color on both components that were merged. Thus in either case, $\ka$ is monochromatic on the components of $S\symmdiff e$, so $\io(S,\ka)\in\cS$ for all $(S,\ka)\in\cS$.

The fixed points of $\io$ are precisely the pairs $(\emptyset,\ka)$ where $\ka$ is a proper $[t]$-coloring of $S$. Moreover, these fixed points have positive sign, as desired. On the other elements of $\cS$, $\io$ is sign-reversing because $|S\symmdiff e|=|S|\pm 1$. Finally, $\io$ is an involution because $\ka$, and thus the definition of the edge $e$, does not change when passing from $(S,\ka)$ to $\io(S,\ka)$.
\end{proof}

% The symmetric function generalization of Theorem~\ref{PAllS}, due to Stanley, follows by adapting our  proof above.

The proof of the symmetric function analogue requires only minor modifications.

\begin{thm}[Stanley~{\cite[Theorem~2.5]{sta:sfg}}]
\label{XAllS}
For every graph $G=(V,E)$, we have
\[
%	X(G;\bx) = \sum_{S\subseteq E} (-1)^{|S|}p_{\la(S)}(\bx).
	X(G;\bx) = \sum_{S\subseteq E} (-1)^{|S|}p_{\la(S)}.
\]
\end{thm}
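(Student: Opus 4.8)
The plan is to mirror the proof of Theorem~\ref{PAllS} verbatim, replacing counting by weighted counting. First I would reinterpret the right-hand side combinatorially: by Lemma~\ref{IM(S;x)}, for each spanning subgraph $S\subseteq E$ we have $p_{\la(S)}=\sum_\ka \bx^\ka$, where the sum is over all $\bbP$-colorings $\ka$ of $G$ that are monochromatic on the components of $S$. Substituting this and rearranging the (formal) sum gives
\[
	\sum_{S\subseteq E}(-1)^{|S|}p_{\la(S)}
	=
	\sum_{(S,\ka)\in\cS'}\sgn(S,\ka)\,\bx^\ka,
\]
where now $\cS'=\{(S,\ka): S\subseteq E\text{ and }\ka:V\ra\bbP\text{ is monochromatic on the components of }S\}$ and $\sgn(S,\ka)=(-1)^{|S|}$ as before. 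Note $\cS'$ is infinite, but for any fixed monomial only finitely many pairs contribute, since a monomial determines the multiset of colors used and $\cS'$ has finitely many pairs using a given multiset of colors on $V$; this is exactly the finiteness hypothesis needed for the weighted cancellation identity recorded in the introduction.

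Next I would define the involution on $\cS'$ by the same formula as $\io$: if $\ka$ is proper, set $\io(S,\ka)=(S,\ka)$ (forcing $S=\emp$); otherwise let $e$ be the last monochromatic edge of $\ka$ and set $\io(S,\ka)=(S\symmdiff e,\ka)$. The verification that $\io$ maps $\cS'$ to $\cS'$, is an involution, is sign-reversing off its fixed points, and has fixed-point set $\{(\emp,\ka): \ka\text{ a proper }\bbP\text{-coloring of }G\}$ is word-for-word the proof of Proposition~\ref{prop-io-sign-rev}; nothing there used that the color set was finite. The one new observation is that $\io$ is \emph{weight-preserving}: $\io$ never changes $\ka$, only $S$, so $\bx^{\ka}$ is unchanged. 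Applying the weighted version of the cancellation identity then yields
\[
	\sum_{S\subseteq E}(-1)^{|S|}p_{\la(S)}
	=
	\sum_{(S,\ka)\in\Fix\io}\bx^\ka
	=
	\sum_{\ka\text{ proper}}\bx^\ka
	=
	X(G;\bx).
\]

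I do not expect a genuine obstacle here; the content is entirely in noticing that the earlier bijection and its analysis transfer without change. The only point requiring a sentence of care is the finiteness/weight-preservation bookkeeping that legitimizes passing from the finite sign-reversing involution identity to its weighted analogue on the infinite set $\cS'$, and this is precisely the adaptation already flagged in the introduction. So the write-up will be short: reinterpret via Lemma~\ref{IM(S;x)}, invoke the same $\io$, cite Proposition~\ref{prop-io-sign-rev} for all its properties, remark that $\io$ is weight-preserving, and conclude.
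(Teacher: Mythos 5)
Your proposal is correct and follows essentially the same route as the paper's proof: reinterpret the right-hand side via Lemma~\ref{IM(S;x)}, reuse the involution $\io$ and Proposition~\ref{prop-io-sign-rev} verbatim on $\mathbb{P}$-colorings, and observe that $\io$ is weight-preserving since it never alters $\ka$. Your extra sentence verifying the finiteness of pairs of a given weight is a nice bit of care that the paper only gestures at in the introduction, but it does not change the argument.
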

\begin{proof}
Let $\cS$ denote the set of pairs $(S,\ka)$ where $S\subseteq E$ and $\ka$ is a $\mathbb{P}$-coloring of $G$ that is monochromatic on the components of $S$. Since $\io$ does not depend on the range of $\ka$, its definition extends to pairs where $\ka$ has range $\mathbb{P}$ without modification, as do both the statement and the proof of Proposition~\ref{prop-io-sign-rev}.

By defining the weight of a pair $(S,\ka)\in\cS$ as
\[
	\wt(S,\ka)=\bx^\ka
\]
and appealing to Lemma~\ref{IM(S;x)}, we see that
\[
	\sum_{S\subseteq E} (-1)^{|S|}p_{\la(S)}
	=
	\sum_{(S,\,\ka)\in\cS} \sgn(S,\ka) \wt(S,\ka).
\]
Since $\wt(S,\ka)$ depends only on $\ka$, which does not change in passing from $(S,\ka)$ to $\io(S,\ka)$, it follows that $\io$ is weight-preserving. Because $\io$ is (still) sign-reversing, every term in the sum on the right-hand side of the equation above cancels except those terms corresponding to pairs $(\emptyset,\ka)\in\cS$ where $\ka$ is a proper $\mathbb{P}$-coloring of $G$, and thus
\[
	\sum_{S\subseteq E} (-1)^{|S|}p_{\la(S)}
	=
	\sum_{(S,\,\ka)\in\cS} \sgn(S,\ka) \wt(S,\ka)
	=
	\sum_{(S,\,\ka)\in\Fix\io} \wt(S,\ka)
	=
	X(G;\bx),
\]
proving the theorem.
\end{proof}

\section{NBC spanning subgraphs.}
\label{sec-NBC}

In 1932, Whitney~\cite{whi:lem} showed that many of the terms in Theorem~\ref{PAllS} cancel with each other. The terms of the summation that remain after this pruning are precisely those that correspond to spanning subgraphs without broken circuits. In this section we show how this cancellation can be seen from our bijective proof of Theorem~\ref{PAllS}, and we also establish Stanley's symmetric function analogue of Whitney's result.

% Note: I am giving this definition here to better match the definition we give later for mixed graphs.
First we must define broken circuits. A \emph{walk} from the vertex $v_0$ to the vertex $v_k$ in the graph $G=(V,E)$ is an alternating sequence $v_0$, $e_1$, $v_1$, $e_2$, $v_2$, $\dots$, $e_k$, $v_k$ of vertices and edges such that for all $i\in[k]$ we have $e_i=v_{i-1}v_i\in E$.
This walk is a \emph{path} if it does not repeat any vertices or edges, except possibly the first and the last vertices. Note that we allow both walks and paths to be edgeless.
A walk or a path is \emph{closed} if $v_0=v_k$, that is, if its first and last vertices are the same. A \emph{cycle}, also known as a circuit, is a closed path with more than one vertex. Just as we conflate spanning subgraphs with their edge sets, we also conflate paths and cycles with their edge sets.
For example, the graph $G$ in Figure~\ref{bc} has a unique cycle which we denote as $C=\{e_1,e_2,e_3\}$.
Graphs not containing cycles can be referred to as \emph{acyclic}, but are more commonly called \emph{forests}.

Given a fixed total ordering on the edges $E$ of $G$, a \emph{broken circuit} is a subset $B\subseteq E$ of the form
\[
	B=C-\max C
\]
where $C$ is a cycle and $\max C$ is the last edge of $C$. For example, if the edges of the graph in Figure~\ref{bc} are ordered $e_1<e_2<e_3<e_4$, then the cycle $C=\{e_1,e_2,e_3\}$ corresponds to the broken circuit $B=\{e_1,e_2\}$. A spanning subgraph $S\subseteq E$ is \emph{NBC} (short for \emph{no broken circuits}) if it does not contain any broken circuits. It is frequently helpful to note that NBC spanning subgraphs are necessarily forests; since they don't contain broken circuits, they certainly don't contain cycles (as every cycle contains its own broken circuit). The rightmost picture in Figure~\ref{bc} represents an NBC spanning subgraph (together with colors that the reader may ignore for now).
%Sets containing broken circuits are be called {\em BC}.

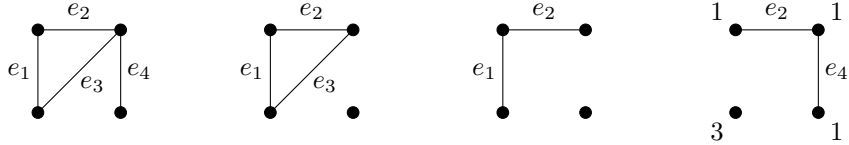
\begin{figure}
\begin{center}
\begin{footnotesize}
\begin{tikzpicture}[scale=1.1, baseline=(current bounding box.center)]
	%\draw(-.8,.5) node{$G=$};
	\draw (0,0)--(1,1)--(0,1)--(0,0) (1,0)--(1,1);
  \draw [fill=black] (0,1) circle (2pt) node [above left, circle, inner sep=4pt] {\textcolor{white}{$1$}};
	\draw [fill=black] (1,1) circle (2pt) node [above right, circle, inner sep=4pt] {\textcolor{white}{$1$}};
	\draw [fill=black] (0,0) circle (2pt) node [below left, circle, inner sep=4pt] {\textcolor{white}{$3$}};
	\draw [fill=black] (1,0) circle (2pt) node [below right, circle, inner sep=4pt] {\textcolor{white}{$1$}};
	\node [left, circle, inner sep=1pt] at (0,0.5) {$e_1$};
	\node [above, circle, inner sep=1pt] at (0.5,1) {$e_2$};
	\node [below right, circle, inner sep=1pt] at (0.5,0.5) {$e_3$};
	\node [right, circle, inner sep=1pt] at (1,0.5) {$e_4$};
\end{tikzpicture}
\qquad
\begin{tikzpicture}[scale=1.1, baseline=(current bounding box.center)]
	%\draw(-.8,.5) node{$C=$};
	\draw (0,0)--(1,1)--(0,1)--(0,0);
  \draw [fill=black] (0,1) circle (2pt) node [above left, circle, inner sep=4pt] {\textcolor{white}{$1$}};
	\draw [fill=black] (1,1) circle (2pt) node [above right, circle, inner sep=4pt] {\textcolor{white}{$1$}};
	\draw [fill=black] (0,0) circle (2pt) node [below left, circle, inner sep=4pt] {\textcolor{white}{$3$}};
	\draw [fill=black] (1,0) circle (2pt) node [below right, circle, inner sep=4pt] {\textcolor{white}{$1$}};
	\node [left, circle, inner sep=1pt] at (0,0.5) {$e_1$};
	\node [above, circle, inner sep=1pt] at (0.5,1) {$e_2$};
	\node [below right, circle, inner sep=1pt] at (0.5,0.5) {$e_3$};
	%\node [right, circle, inner sep=1pt] at (1,0.5) {$e_4$};
\end{tikzpicture}
\qquad
\begin{tikzpicture}[scale=1.1, baseline=(current bounding box.center)]
	%\draw(-.8,.5) node{$B=$
	\draw (1,1)--(0,1)--(0,0);
  \draw [fill=black] (0,1) circle (2pt) node [above left, circle, inner sep=4pt] {\textcolor{white}{$1$}};
	\draw [fill=black] (1,1) circle (2pt) node [above right, circle, inner sep=4pt] {\textcolor{white}{$1$}};
	\draw [fill=black] (0,0) circle (2pt) node [below left, circle, inner sep=4pt] {\textcolor{white}{$3$}};
	\draw [fill=black] (1,0) circle (2pt) node [below right, circle, inner sep=4pt] {\textcolor{white}{$1$}};
	\node [left, circle, inner sep=1pt] at (0,0.5) {$e_1$};
	\node [above, circle, inner sep=1pt] at (0.5,1) {$e_2$};
	%\node [below right, circle, inner sep=1pt] at (0.5,0.5) {$e_3$};
	%\node [right, circle, inner sep=1pt] at (1,0.5) {$e_4$};
\end{tikzpicture}
\qquad
\begin{tikzpicture}[scale=1.1, baseline=(current bounding box.center)]
	%\draw(-1,.5) node{$(D,\ka)=$};
	\draw (1,1)--(0,1) (1,0)--(1,1);
	\draw [fill=black] (0,1) circle (2pt) node [above left, circle, inner sep=4pt] {$1$};
	\draw [fill=black] (1,1) circle (2pt) node [above right, circle, inner sep=4pt] {$1$};
	\draw [fill=black] (0,0) circle (2pt) node [below left, circle, inner sep=4pt] {$3$};
	\draw [fill=black] (1,0) circle (2pt) node [below right, circle, inner sep=4pt] {$1$};
	\node [above, circle, inner sep=1pt] at (0.5,1) {$e_2$};
	\node [right, circle, inner sep=1pt] at (1,0.5) {$e_4$};
\end{tikzpicture}
\caption{A graph with ordered edges $e_1<e_2<e_3<e_4$, a cycle in this graph, the corresponding broken circuit, and an NBC spanning subgraph (with colors).}
\label{bc}
\end{footnotesize}
\end{center}
\end{figure}

Consider a particular pair $(S,\ka)$ from the set $\cS$ in the proof of Theorem~\ref{PAllS}, so $S$ is an arbitrary spanning subgraph of $G$ and $\ka$ is monochromatic on the components of $S$. Suppose that $S$ contains a broken circuit, say $B=C-\max C$. Since the vertices of $C$ lie in the same component of $S$ (because $B\subseteq S$), it follows that $\ka$ is monochromatic on the vertices of $C$, and thus in particular on the edge $\max C$. Therefore, if $S$ contains the broken circuit $B$, then we have $\io(S,\ka)=(S\symmdiff e,\ka)$ for an edge $e$ that is either equal to $\max C$ or occurs after it in the total ordering on edges of $G$. It follows that if the broken circuit $B$ is contained in $S$, then it is also contained in $S\symmdiff e$. We record this fact below.

\begin{prop}
\label{prop-fpf-sri}
The mapping $\io$ restricts to a fixed-point-free sign-reversing involution on the set of pairs $(S,\ka)$ where $S$ is a spanning subgraph containing a broken circuit and $\ka$ is monochromatic on the components of $S$.
\end{prop}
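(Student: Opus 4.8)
The plan is to show that the involution $\io$ from Proposition~\ref{prop-io-sign-rev}, when restricted to the set
\[
	\cS'=\{(S,\ka)\in\cS \mid \text{$S$ contains a broken circuit}\},
\]
is (i) well-defined as a map $\cS'\to\cS'$, (ii) still an involution, (iii) still sign-reversing, and (iv) fixed-point-free. The content of the preceding paragraph already does the crucial work for (i) and (iv), so the proof is mostly a matter of assembling these observations.

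First I would argue that $\io$ maps $\cS'$ into $\cS'$. Take $(S,\ka)\in\cS'$, and fix a broken circuit $B=C-\max C$ contained in $S$. Since $\ka$ is monochromatic on the components of $S$ and the vertices of $C$ all lie in one component (as $B\subseteq S$ is connected and spans the vertices of $C$), $\ka$ is constant on $V(C)$; in particular $\max C$ is monochromatic under $\ka$. Now $\io(S,\ka)=(S\symmdiff e,\ka)$ where $e$ is the \emph{last} monochromatic edge, so $e\ge\max C$ in the edge order. Hence $e\ne$ any edge of $B$ (every edge of $B$ is earlier than $\max C$, hence strictly earlier than $e$), so toggling $e$ does not remove any edge of $B$; thus $B\subseteq S\symmdiff e$, and $(S\symmdiff e,\ka)\in\cS'$. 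Note also that $\ka$ is not proper here (it has the monochromatic edge $\max C$), so $\io(S,\ka)\ne(S,\ka)$; this gives (iv), fixed-point-freeness.

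The remaining points are inherited from Proposition~\ref{prop-io-sign-rev}. Since $\io$ is an involution on all of $\cS$ and we have just shown it preserves the subset $\cS'$, it restricts to an involution on $\cS'$. It is sign-reversing on $\cS'$ for the same reason as before, namely $|S\symmdiff e|=|S|\pm1$ so $\sgn(S\symmdiff e,\ka)=-\sgn(S,\ka)$.

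I do not anticipate a genuine obstacle; the one point requiring care is the inequality $e\ge\max C$, i.e.\ confirming that the last monochromatic edge of $\ka$ cannot precede $\max C$. This is immediate once one observes that $\max C$ is itself monochromatic under $\ka$, so the last monochromatic edge is at least $\max C$. Everything else is bookkeeping.
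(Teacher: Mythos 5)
Your argument is correct and matches the paper's own reasoning: the key step in both is that $\max C$ is monochromatic under $\ka$, so the toggled edge $e$ satisfies $e\ge\max C$ and therefore cannot lie in $B$, which gives closure of the broken-circuit pairs under $\io$; fixed-point-freeness, the involution property, and sign-reversal are then inherited from Proposition~\ref{prop-io-sign-rev} exactly as you say. The only difference is presentational — the paper records these observations in the paragraph preceding the proposition rather than in a formal proof — so there is nothing to add.
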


This result shows that the contributions of all such pairs cancel in the summation in Theorem~\ref{PAllS}, leaving us with only the NBC spanning subgraphs. This establishes Whitney's no broken circuits theorem.

\begin{thm}[Whitney~{\cite[Section~7]{whi:lem}}]
\label{WhiThm}
For every graph $G=(V,E)$, every total ordering of its edges, and every positive integer $t$, we have
\[
	\pushQED{\qed} 
	\chi(G;t)
	=
	\sum_{\substack{S\subseteq E,\\\textup{$S$ is NBC}\rule{0pt}{7pt}}}
		(-1)^{|S|} t^{c(S)}.
	\qedhere
	\popQED
\]
\end{thm}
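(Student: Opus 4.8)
The plan is to reuse verbatim the sign-reversing involution $\io$ and the index set $\cS$ from the proof of Theorem~\ref{PAllS}, and simply split $\cS$ according to the presence of a broken circuit. Write $\cS = \cS_{\mathrm{bc}} \sqcup \cS_{\mathrm{nbc}}$, where $\cS_{\mathrm{bc}}$ is the set of pairs $(S,\ka)\in\cS$ with $S$ containing a broken circuit and $\cS_{\mathrm{nbc}}$ is the set of pairs with $S$ NBC. The first thing I would verify is that $\io$ respects this partition. The paragraph preceding Proposition~\ref{prop-fpf-sri} already records that $\io(\cS_{\mathrm{bc}})\subseteq\cS_{\mathrm{bc}}$: if $B = C-\max C\subseteq S$, then the vertices of $C$ lie in a single component of $S$, so $\ka$ is monochromatic on all of $C$, hence the last $\ka$-monochromatic edge $e$ is $\max C$ or occurs later, and therefore $B\subseteq S\symmdiff e$. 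Since $\io$ is an involution by Proposition~\ref{prop-io-sign-rev}, mapping $\cS_{\mathrm{bc}}$ into itself forces it to permute $\cS_{\mathrm{bc}}$, and hence also to permute the complement $\cS_{\mathrm{nbc}}$.

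Next I would assemble the two involution facts. Because $\io$ is sign-reversing on $\cS$ and $\sgn(S,\ka)=(-1)^{|S|}$, we may split
\[
	\sum_{(S,\ka)\in\cS}\sgn(S,\ka)
	=
	\sum_{(S,\ka)\in\cS_{\mathrm{bc}}}\sgn(S,\ka)
	+
	\sum_{(S,\ka)\in\cS_{\mathrm{nbc}}}\sgn(S,\ka).
\]
Proposition~\ref{prop-fpf-sri} says $\io$ is a fixed-point-free sign-reversing involution on $\cS_{\mathrm{bc}}$, so the first sum vanishes. Every fixed point of $\io$ has the form $(\emptyset,\ka)$ with $\ka$ a proper coloring (Proposition~\ref{prop-io-sign-rev}), and $\emptyset$ is NBC, so $\Fix\io\subseteq\cS_{\mathrm{nbc}}$; thus $\io$ restricted to $\cS_{\mathrm{nbc}}$ is a sign-reversing involution with the same fixed-point set, giving
\[
	\sum_{(S,\ka)\in\cS_{\mathrm{nbc}}}\sgn(S,\ka)
	=
	|\Fix\io|
	=
	\chi(G;t),
\]
the last equality being precisely the count established in the proof of Theorem~\ref{PAllS}.

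Finally I would evaluate the left-hand side of the last display combinatorially: for a fixed NBC spanning subgraph $S$, Lemma~\ref{IM(S;[t])} gives $t^{c(S)}$ colorings $\ka:V\to[t]$ that are monochromatic on the components of $S$, so
\[
	\sum_{(S,\ka)\in\cS_{\mathrm{nbc}}}\sgn(S,\ka)
	=
	\sum_{\substack{S\subseteq E,\\ \textup{$S$ is NBC}}}(-1)^{|S|}t^{c(S)},
\]
and comparing the two evaluations yields the theorem. I do not expect any genuine obstacle here: all the substantive work is already contained in Proposition~\ref{prop-fpf-sri} (the assertion that $\io$ neither creates nor destroys a broken circuit) and in Theorem~\ref{PAllS}; the only point requiring a moment's care is the bookkeeping observation that an involution mapping one block of a two-block partition into itself must permute each block, which is what licenses passing from "$\io$ is fixed-point-free on $\cS_{\mathrm{bc}}$" to "the $\cS_{\mathrm{bc}}$ terms cancel and only the $\cS_{\mathrm{nbc}}$ terms survive."
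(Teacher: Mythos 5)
Your proposal is correct and is essentially the paper's own argument: the paper likewise restricts the involution $\io$ from Theorem~\ref{PAllS} to pairs whose subgraph contains a broken circuit (Proposition~\ref{prop-fpf-sri}), concludes those contributions cancel, and is left with the NBC terms summing to $\chi(G;t)$. Your extra bookkeeping about $\io$ permuting each block of the partition is just a more explicit rendering of the same cancellation step.
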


The symmetric function analogue of the no broken circuit theorem follows  immediately from the observation, made in the proof of Lemma~\ref{XAllS}, that the quantity $\wt (S,\ka)=\bx^\ka$ is preserved by $\io$ because $\ka$ does not change.

\begin{thm}[Stanley~{\cite[Theorem~2.9]{sta:sfg}}]
\label{XNBC}
For every graph $G=(V,E)$ and every total ordering of its edges, we have
\[
	\pushQED{\qed} 
	X(G;\bx)
	=
	\sum_{\substack{S\subseteq E,\\\textup{$S$ is NBC}\rule{0pt}{7pt}}}
		(-1)^{|S|} p_{\lambda(S)}.
	\qedhere
	\popQED
\]
\end{thm}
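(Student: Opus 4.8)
The plan is to mirror exactly the argument that took us from Theorem~\ref{PAllS} to Theorem~\ref{WhiThm}, but now in the weighted setting of Theorem~\ref{XAllS}. Recall that in the proof of Theorem~\ref{XAllS} we worked with the set $\cS$ of pairs $(S,\ka)$ with $S\subseteq E$ and $\ka$ a $\mathbb{P}$-coloring monochromatic on the components of $S$, equipped with $\sgn(S,\ka)=(-1)^{|S|}$ and $\wt(S,\ka)=\bx^\ka$, and we observed that $\io$ is a sign-reversing, weight-preserving involution whose fixed points are the pairs $(\emptyset,\ka)$ with $\ka$ a proper $\mathbb{P}$-coloring. So
\[
	X(G;\bx)=\sum_{(S,\ka)\in\cS}\sgn(S,\ka)\wt(S,\ka)=\sum_{S\subseteq E}(-1)^{|S|}p_{\la(S)}.
\]

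First I would split $\cS$ as the disjoint union $\cS=\cS_{\mathrm{NBC}}\sqcup\cS_{\mathrm{BC}}$, where $\cS_{\mathrm{BC}}$ consists of those pairs $(S,\ka)$ for which $S$ contains a broken circuit and $\cS_{\mathrm{NBC}}$ of those for which $S$ is NBC. Next I would invoke Proposition~\ref{prop-fpf-sri}, which already tells us that $\io$ restricts to a fixed-point-free sign-reversing involution on $\cS_{\mathrm{BC}}$; the only thing to add is the trivial remark that, exactly as in the proof of Theorem~\ref{XAllS}, $\wt(S,\ka)=\bx^\ka$ is unchanged when $\io$ replaces $(S,\ka)$ by $(S\symmdiff e,\ka)$, since $\ka$ is not altered. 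Hence $\io$ is also weight-preserving on $\cS_{\mathrm{BC}}$, and therefore
\[
	\sum_{(S,\ka)\in\cS_{\mathrm{BC}}}\sgn(S,\ka)\wt(S,\ka)=0.
\]

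Subtracting this vanishing sum from the identity above leaves only the contributions of $\cS_{\mathrm{NBC}}$, so
\[
	X(G;\bx)=\sum_{(S,\ka)\in\cS_{\mathrm{NBC}}}\sgn(S,\ka)\wt(S,\ka)=\sum_{\substack{S\subseteq E,\\ \textup{$S$ is NBC}}}(-1)^{|S|}\sum_{\ka}\bx^\ka,
\]
where the inner sum runs over $\mathbb{P}$-colorings monochromatic on the components of $S$; by Lemma~\ref{IM(S;x)} this inner sum is $p_{\la(S)}$, which is the claimed formula. (Strictly speaking one should note, as before, that summing over the infinite set $\cS$ is legitimate because for each fixed monomial only finitely many pairs contribute, so the rearrangement into NBC and BC parts is valid.)

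I do not expect a genuine obstacle here: all the real work was done in establishing Proposition~\ref{prop-io-sign-rev}, Proposition~\ref{prop-fpf-sri}, and Lemma~\ref{IM(S;x)}. The one point that deserves a sentence of care is the convergence/rearrangement issue in the infinite setting — i.e., verifying that $\io$ being weight-preserving on $\cS_{\mathrm{BC}}$ really does let us cancel those terms coefficient-by-coefficient — but this is handled verbatim by the discussion of weight-preserving involutions given in the introduction and reused in the proof of Theorem~\ref{XAllS}, so it amounts to a pointer rather than new argument.
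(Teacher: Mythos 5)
Your proposal is correct and follows the paper's own route exactly: the paper also deduces Theorem~\ref{XNBC} by combining Proposition~\ref{prop-fpf-sri} with the observation (already made in proving Theorem~\ref{XAllS}) that $\io$ preserves $\wt(S,\ka)=\bx^\ka$ since $\ka$ is unchanged, so the broken-circuit pairs cancel and Lemma~\ref{IM(S;x)} gives the $p_{\la(S)}$ terms. Your extra remarks on the splitting of $\cS$ and on the finiteness-per-monomial issue are just a more explicit write-up of the same argument.
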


\section{Acyclic orientations.}
\label{sec-acyclic}

Thus far we have only defined $\chi(G;t)$ for positive integers $t$, where Theorem~\ref{PAllS} shows that it is equal to a polynomial. For the rest of the article, we take this polynomial as the \emph{definition} of $\chi(G;t)$. With this change, we may evaluate $\chi(G;t)$ at negative values of $t$. Our first such evaluation, below, is a consequence of Whitney's no broken circuits theorem.
 
\begin{cor}
\label{WhiCor}
For every graph $G=(V,E)$, every total ordering of its edges, and every positive integer $t$, we have
%\vs{10pt}
\[
	\chi(G;-t)
	=
	(-1)^{|V|}\sum_{\substack{S\subseteq E,\\\textup{$S$ is NBC}\rule{0pt}{7pt}}}
		t^{c(S)}.
\]
\end{cor}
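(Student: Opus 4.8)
The plan is to derive Corollary~\ref{WhiCor} directly from Whitney's no broken circuits theorem (Theorem~\ref{WhiThm}) by the standard substitution $t \mapsto -t$. The only subtlety is that Theorem~\ref{WhiThm} is stated for positive integers $t$, while we now wish to evaluate the chromatic \emph{polynomial} at negative integers; this is legitimate precisely because we have just redefined $\chi(G;t)$ to be the polynomial determined by Theorem~\ref{PAllS} (equivalently Theorem~\ref{WhiThm}).

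First I would observe that the identity
\[
	\chi(G;t) = \sum_{\substack{S\subseteq E,\\ \textup{$S$ is NBC}}} (-1)^{|S|} t^{c(S)}
\]
holds for all positive integers $t$ by Theorem~\ref{WhiThm}, and hence holds as an identity of polynomials in $t$, since two polynomials agreeing at infinitely many points are equal. Therefore I may substitute $t \mapsto -t$ into this polynomial identity and evaluate at any positive integer $t$, obtaining
\[
	\chi(G;-t) = \sum_{\substack{S\subseteq E,\\ \textup{$S$ is NBC}}} (-1)^{|S|} (-t)^{c(S)}
	= \sum_{\substack{S\subseteq E,\\ \textup{$S$ is NBC}}} (-1)^{|S|+c(S)} t^{c(S)}.
\]

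Next I would simplify the sign $(-1)^{|S|+c(S)}$. The key point is that every NBC spanning subgraph $S$ is a forest, as noted in Section~\ref{sec-NBC} (since $S$ contains no broken circuits, it contains no cycles). For a forest $S$ on the vertex set $V$, the number of edges, the number of vertices, and the number of components are related by $|S| = |V| - c(S)$. Hence $|S| + c(S) = |V|$, so $(-1)^{|S|+c(S)} = (-1)^{|V|}$ is a constant independent of $S$. Pulling this constant out of the sum yields exactly
\[
	\chi(G;-t) = (-1)^{|V|} \sum_{\substack{S\subseteq E,\\ \textup{$S$ is NBC}}} t^{c(S)},
\]
as claimed.

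There is no real obstacle here; the proof is a one-line consequence of Theorem~\ref{WhiThm} together with the forest identity $|S| = |V| - c(S)$. The only thing to be careful about is the logical status of the substitution $t \mapsto -t$: it is valid because the displayed identity in Theorem~\ref{WhiThm}, holding at all positive integers, is an identity of polynomials, and we have adopted the polynomial as the definition of $\chi(G;t)$ for all $t$. I would state this point explicitly so the reader sees why evaluating at $-t$ is meaningful.
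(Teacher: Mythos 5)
Your proof is correct and follows the same route as the paper: substitute $-t$ into Theorem~\ref{WhiThm} (valid since the identity there determines $\chi(G;t)$ as a polynomial, which is now its definition) and then use the fact that an NBC spanning subgraph is a forest, so $c(S)=|V|-|S|$ and the sign $(-1)^{|S|+c(S)}=(-1)^{|V|}$ is constant. Your extra care in justifying the substitution $t\mapsto -t$ is a point the paper leaves implicit, but the argument is the same.
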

\begin{proof}
Theorem~\ref{WhiThm} shows us that
\[
	\chi(G;-t)
	=
	\sum_{\substack{S\subseteq E,\\\textup{$S$ is NBC}\rule{0pt}{7pt}}}
		(-1)^{|S|+c(S)} t^{c(S)}.
\]
If $S$ is an NBC spanning subgraph of $G$, then since it must be a forest, it has $|V|-|S|$ components. The result follows immediately.
\end{proof}

Stanley has established a beautiful interpretation of the value of $\chi(G;t)$ for negative integers $t$. Indeed, this relationship is one of the most striking examples of the phenomenon he later termed \emph{combinatorial reciprocity}~\cite{sta:crt}. In this section we give a bijective proof of this result in the special case of $t=-1$, following in the footsteps of Blass and Sagan~\cite{bs:bpt}. In the next section we demonstrate how this special case implies the theorem for all negative values of $t$, and also establish the analogous theorem for the chromatic symmetric function.

\begin{figure}
\begin{center}
\begin{footnotesize}
\begin{tikzpicture}[scale=1.1, baseline=(current bounding box.center)]
	%\draw(-.8,.5) node{$G=$};
	\draw (0,0)--(1,1)--(0,1)--(0,0) (1,0)--(1,1);
  \draw [fill=black] (0,1) circle (2pt) node [above left, circle, inner sep=4pt] {\textcolor{white}{$3$}};
	\draw [fill=black] (1,1) circle (2pt) node [above right, circle, inner sep=4pt] {\textcolor{white}{$3$}};
	\draw [fill=black] (0,0) circle (2pt) node [below left, circle, inner sep=4pt] {\textcolor{white}{$1$}};
	\draw [fill=black] (1,0) circle (2pt) node [below right, circle, inner sep=4pt] {\textcolor{white}{$4$}};
\end{tikzpicture}
\qquad
\begin{tikzpicture}[scale=1.1, baseline=(current bounding box.center)]
	%\draw(-.8,.5) node{$O=$};
	\draw[->-] (0,0)--(1,1);
	\draw[->-] (0,0)--(0,1);
	\draw[->-] (1,1)--(0,1);
	\draw[->-] (1,1)--(1,0);
	\draw (0,0)--(1,1)--(0,1)--(0,0) (1,0)--(1,1);
  \draw [fill=black] (0,1) circle (2pt) node [above left, circle, inner sep=4pt] {\textcolor{white}{$3$}};
	\draw [fill=black] (1,1) circle (2pt) node [above right, circle, inner sep=4pt] {\textcolor{white}{$3$}};
	\draw [fill=black] (0,0) circle (2pt) node [below left, circle, inner sep=4pt] {\textcolor{white}{$1$}};
	\draw [fill=black] (1,0) circle (2pt) node [below right, circle, inner sep=4pt] {\textcolor{white}{$4$}};
\end{tikzpicture}
\qquad
\begin{tikzpicture}[scale=1.1, baseline=(current bounding box.center)]
	%\draw(-1,.5) node{$(O,\ka)=$};
	\draw[->-] (0,0)--(1,1);
	\draw[->-] (0,0)--(0,1);
	\draw[->-] (1,1)--(0,1);
	\draw[->-] (1,1)--(1,0);
	\draw (0,0)--(1,1)--(0,1)--(0,0) (1,0)--(1,1);
	\draw [fill=black] (0,1) circle (2pt) node [above left, circle, inner sep=4pt] {$3$};
	\draw [fill=black] (1,1) circle (2pt) node [above right, circle, inner sep=4pt] {$3$};
	\draw [fill=black] (0,0) circle (2pt) node [below left, circle, inner sep=4pt] {$1$};
	\draw [fill=black] (1,0) circle (2pt) node [below right, circle, inner sep=4pt] {$4$};
\end{tikzpicture}
\caption{A graph, an acyclic orientation, and a compatible coloring (which can be ignored until Section~\ref{sec-acyclic2}).}
\label{gao}
\end{footnotesize}
\end{center}
\end{figure}
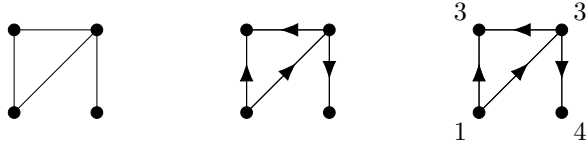

%There is no reason to include loops so I have eliminated the possibility by using distinct vertices.
Stanley's interpretation of $\chi(G;t)$ for negative $t$ involves orientations of $G$, and so we must introduce oriented (or directed) edges, which we call \emph{arcs}. Given a pair of distinct vertices $u$ and $v$, there are two possible arcs between them, which we denote by $\ora{uv}=\ola{vu}$ and $\ola{uv}=\ora{vu}$. An \emph{orientation} of the (undirected) graph $G=(V,E)$ is obtained by replacing each edge $uv\in E$ by one of the arcs $\ora{uv}$ or $\ora{vu}$. Each such orientation is then an \emph{oriented graph}, which we denote by $O=(V,A)$, where $A$ is the set of arcs. Figure~\ref{gao} shows a graph on the left and one of its orientations in the center.

The bijection employed in this section proceeds edge-by-edge, so in the intermediate stages our graphs have both edges and arcs. It is therefore convenient to set our proof in the context of mixed graphs, whose study dates to a 1966 paper of Harary and Palmer~\cite{har:enu}. A \emph{mixed graph} is a triple $M=(V,E,A)$ where $V$ is a set of vertices, $E$ is a set of edges, and $A$ is a set of arcs. To eliminate the possibility of confusion, we briefly adapt our definitions of walks, paths, and cycles to this context.

A \emph{walk} from $v_0$ to $v_k$ in the mixed graph $M=(V,E,A)$ is an alternating sequence $v_0$, $c_1$, $v_1$, $c_2$, $v_2$, $\dots$, $c_k$, $v_k$ of vertices and edges/arcs such that for all $i\in[k]$, either $c_i=v_{i-1}v_i\in E$ or $c_i=\ora{v_{i-1}v_i}\in A$. We say that this walk \emph{traverses} each of these edges and arcs. We further call this walk a \emph{path} if it does not repeat any vertices, edges, or arcs, except possibly the first and last vertices.
For an example of this definition, the sequence $u$, $uv$, $v$, $vu$, $u$ is not considered to be a path because it repeats the edge $uv$, while the sequence $u$, $\ora{uv}$, $v$, $vu$, $u$ is considered to be a path.

We say that a walk is \emph{closed} if its first and last vertices are the same, and we call a closed path a \emph{cycle} if it has more than one vertex. For example, the sequence $u$, $\ora{uv}$, $v$, $vu$, $u$ of the previous paragraph is a cycle. Finally, a mixed graph is \emph{acyclic} if it does not contain a cycle. For example, the mixed graph (which also happens to be an oriented graph) in the center of Figure~\ref{gao} is acyclic.

% Note that this "if a graph contains a walk..." fact is not just setting up the mixed graph result. We actually use that fact in our argument.

One of the basic lemmas of graph theory is that if a graph contains a walk between two vertices, then it also contains a path between them (to prove this, simply shorten the walk each time it repeats a vertex). For mixed graphs, we have the following result whose proof is similar in spirit.

\begin{prop}
\label{prop-closed-walk}
If a mixed graph contains a closed walk that traverses at least one arc, then it also contains a cycle.
\end{prop}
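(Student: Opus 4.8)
The plan is to mimic, for mixed graphs, the standard ``shorten the walk'' argument alluded to just before the statement. Among all closed walks in the mixed graph that traverse at least one arc, choose one, say $W = v_0, c_1, v_1, \dots, c_k, v_k$ with $v_0 = v_k$, of minimum length $k$. I claim $W$ is a cycle. By the definition of cycle this comes down to two things: that $W$ repeats no vertex, edge, or arc except for the coincidence $v_0 = v_k$, and that $W$ has more than one vertex. Both will follow from minimality together with the hypothesis that an arc is present.

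First I would rule out repeated vertices. Suppose $v_i = v_j$ for some $0 \le i < j \le k$ with $(i,j) \ne (0,k)$. Then $W$ splits into two closed walks, $W' = v_i, c_{i+1}, \dots, c_j, v_j$ and $W'' = v_0, c_1, \dots, c_i, v_i = v_j, c_{j+1}, \dots, c_k, v_k$, of lengths $j-i$ and $k-(j-i)$; because $(i,j)\ne(0,k)$, each of these lengths lies strictly between $0$ and $k$. The arc that $W$ is assumed to traverse occurs either in $W'$ or in $W''$, so one of them is a strictly shorter closed walk still traversing an arc, contradicting the minimality of $W$. Hence $v_0,\dots,v_{k-1}$ are distinct.

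Next I would rule out repeated edges and arcs. If an arc $\ora{uv}$ were traversed at two positions $i<j$, then each traversal runs from $u$ to $v$, forcing $v_{i-1}=v_{j-1}=u$ with $i-1\ne j-1$, contradicting the distinctness just established. If an edge $uv$ were traversed at positions $i<j$, the two traversals cannot run in the same direction (same argument), so one runs $u\to v$ and the other $v\to u$; then $v_i = v = v_{j-1}$, and since this is not an allowed repeat it forces $j=i+1$, so $W$ contains the detour $\dots, v_{i-1}=u,\ c_i,\ v_i=v,\ c_{i+1},\ v_{i+1}=u,\dots$ with $c_i=c_{i+1}=uv$. Excising this detour (deleting $c_i, v_i, c_{i+1}$) yields a strictly shorter closed walk from $v_0$ to $v_0$ that still traverses the same arc as $W$, since only edge traversals were removed --- again contradicting minimality. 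Finally, $W$ has more than one vertex: if $k\le 1$ then $W$ would traverse at most a single edge or arc joining $v_0$ to itself, which is impossible for an arc as arcs join distinct vertices, so $k\ge 2$ and $v_0\ne v_1$ by distinctness. Therefore $W$ is a cycle.

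I expect the main obstacle to be the repeated-edge case. For ordinary undirected graphs a closed walk need not contain a cycle precisely because of back-and-forth traversals of a single edge, so the crux here is to show (i) that, once vertex repeats are excluded, the only surviving repeated-edge pattern is exactly such an immediately reversed detour, and (ii) that removing it cannot destroy the arc that makes $W$ count. Both are short index chases, but they are where the ``at least one arc'' hypothesis genuinely earns its keep; the rest is routine bookkeeping with walk indices.
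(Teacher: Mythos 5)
Your proof is correct and takes essentially the same route as the paper: choose a shortest closed walk traversing an arc and, at any forbidden vertex repetition, split it into two shorter closed walks, noting that one of them must still traverse an arc, contradicting minimality. The only difference is that you also treat repeated edges and arcs explicitly (reducing them to a vertex repetition or to an immediately reversed edge detour that can be excised without losing the arc), a case the paper's proof passes over when it infers a vertex repetition directly from the walk's failure to be a cycle---extra care, not a different method.
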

\begin{proof}
Suppose that the proposition is not true, so there is an acyclic mixed graph $M$ that contains a closed walk traversing at least one arc. Let $W$ denote the shortest such walk in $M$, and label its vertices and edges/arcs as $v_0$, $c_1$, $v_1$, $c_2$, $v_2$, $\dots$, $c_k$, $v_k$. Since $W$ is closed, we have $v_0=v_k$, and since $W$ is not a cycle itself, we must also have $v_i=v_j$ for indices $i<j$ satisfying $\{i,j\}\neq\{0,k\}$.
Thus $W$ contains two shorter closed walks:
\[
	v_i, c_{i+1}, v_{i+1}, \dots, c_j, v_j
\]
and
\[
	v_j, c_{j+1}, v_{j+1}, \dots, c_k, v_k, c_1, v_1, \dots, c_i, v_i.
\]
These walks together traverse all edges and arcs traversed by $W$, so at least one of them must traverse an arc. This, however, contradicts the minimality of $W$, completing the proof.
\end{proof}

Stanley presented the following result as a corollary of Theorem~\ref{ComThm}. However, we appeal to Proposition~\ref{ComPro} in our proof of Theorem~\ref{ComThm}.

\begin{prop}
[Stanley~{\cite[Corollary~1.3]{sta:aog}}]
\label{ComPro}
For every graph $G=(V,E)$,
\[
	(-1)^{|V|}\chi(G;-1) =\text {the number of acyclic orientations of $G$}.
\]
\end{prop}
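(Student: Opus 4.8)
The plan is to convert the statement into a bare counting identity via Corollary~\ref{WhiCor}, and then to supply the one missing ingredient: an explicit bijection between NBC spanning subgraphs and acyclic orientations.

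\textbf{Step 1 (reduction).}  Apply Corollary~\ref{WhiCor} with $t=1$.  Since $1^{c(S)}=1$ for every spanning subgraph $S$ and $(-1)^{|V|}\cdot(-1)^{|V|}=1$, this gives at once
\[
	(-1)^{|V|}\chi(G;-1)=\bigl|\{S\subseteq E : \text{$S$ is NBC}\}\bigr|.
\]
So it suffices to construct a bijection between the set of NBC spanning subgraphs of $G$ and the set of acyclic orientations of $G$, the total ordering of $E$ being fixed once and for all.

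\textbf{Step 2 (the bijection).}  Following Blass and Sagan, I would define this bijection by an algorithm operating edge by edge, which is exactly what the mixed-graph vocabulary and Proposition~\ref{prop-closed-walk} were introduced to support.  Beginning from an acyclic orientation, one runs through the edges of $G$ in the fixed order; the current object is always a mixed graph, and at each step a single edge is handled---its orientation, and whether it is recorded in the spanning subgraph $S$ being assembled, being determined by the directions present so far together with whether the accumulated mixed graph stays acyclic.  The inverse runs the same bookkeeping backwards: from an NBC subgraph $S$, visiting the edges in the opposite order, each edge's orientation being pinned down by whether it lies in $S$ and by the demand that acyclicity be preserved.  Proposition~\ref{prop-closed-walk} is the device that keeps the process from stalling: it ensures that acyclicity can never simultaneously forbid both ways of handling the current edge---roughly because two simultaneous violations would splice into a closed walk through an arc inside a mixed graph the construction has kept acyclic---so the algorithm is deterministic and its reverse well-defined.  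When acyclicity is binding it dictates a unique choice; when it is not, the free choice is precisely the bit recorded by membership in $S$.

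\textbf{Main obstacle.}  The real content of the proof is showing that this algorithm and its reverse are well-defined, mutually inverse maps landing in the claimed sets: that the subgraph extracted from an acyclic orientation never contains a broken circuit, that the orientation extracted from an NBC subgraph is acyclic and orients every edge, and that both compositions are the identity.  The delicate point is reconciling the definition of a broken circuit, $B=C-\max C$, which is keyed to the \emph{last} edge of each cycle, with the order in which the algorithm visits edges: one must show that the moment $\max C$ is processed, the acyclicity requirement is exactly what forbids $C-\max C$ from already lying inside $S$, and conversely that every step at which a choice is forced can be charged to precisely such a cycle.  Nailing down this correspondence---again via Proposition~\ref{prop-closed-walk} and the invariant that the running mixed graph is acyclic at every stage---is where the effort concentrates; the remaining verifications are routine.
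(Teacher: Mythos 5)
Your Step 1 is exactly the paper's reduction, and your Step 2 aims at the paper's construction (an edge-by-edge bijection through acyclic mixed graphs, in the spirit of Blass--Sagan), but as written it is a plan rather than a proof: the bijection is never actually defined. The local rule you describe---``orientation and membership in $S$ determined by the directions present so far and by acyclicity''---does not determine a map. In particular, when acyclicity permits both orientations of the current edge you say ``the free choice is precisely the bit recorded by membership in $S$,'' but without a distinguished reference orientation of each edge there is no way to say \emph{which} orientation encodes membership. The paper supplies exactly this missing device: it fixes an arbitrary ``normal'' orientation $\ora{e_i}$ of each edge and then gives precise rules (unorient $a_i$ iff it is normal and unorienting creates no cycle, else delete; inversely, orient $e_i$ abnormally iff $e_i\notin M'$ and $\ola{e_i}$ creates no cycle, else normally). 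Everything you defer as ``routine''---that the forward map never creates a broken circuit, that the backward map always produces an acyclic orientation, and that the two compositions are the identity---is in fact the bulk of the paper's argument and cannot be waved through.

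There is also a concrete error in the one justification you do offer. You claim Proposition~\ref{prop-closed-walk} by itself guarantees that acyclicity ``can never simultaneously forbid both ways of handling the current edge,'' because two violations would splice into a closed walk through an arc. But the two obstructing paths (from $u$ to $v$ and from $v$ to $u$, where $e_i=uv$) could each consist entirely of undirected edges, in which case no arc is traversed and Proposition~\ref{prop-closed-walk} gives nothing; indeed in that situation both orientations of $e_i$ really would close a cycle. What rules this out is the NBC hypothesis on the partially built subgraph: an all-edge path from $u$ to $v$ inside $\{e_1,\dots,e_{i-1}\}$ together with $e_i$ is a cycle of $G$ whose largest edge is $e_i$, so the path would be a broken circuit already contained in $S$---a contradiction. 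Only after this does the closed-walk proposition apply (one of the two paths must traverse an arc). So the interplay between the edge ordering, broken circuits, and acyclicity that you flag as the ``main obstacle'' is not an afterthought to be nailed down later; it is the argument, and your sketch both omits its key bookkeeping device (normal versus abnormal orientations) and misattributes the crucial step to Proposition~\ref{prop-closed-walk} alone.
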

\begin{proof}
Fix a total ordering $e_1<e_2<\cdots<e_m$ of the edges of $G$. Corollary~\ref{WhiCor} then shows that
\[
	\chi(G;-1)
	=
	(-1)^{|V|}\sum_{\substack{S\subseteq E,\\\textup{$S$ is NBC}\rule{0pt}{7pt}}}
		1,
\]
or in words, that $(-1)^{|V|}\chi(G;-1)$ is equal to the number of NBC spanning subgraphs of $G$.

It therefore suffices to exhibit a bijection between the NBC spanning subgraphs of $G$ and its acyclic orientations. Define $\cM_i$ to be the set of acyclic mixed graphs comprised of
\begin{itemize}
\item an NBC subset of the edges $\{e_1,\dots,e_i\}$ and
\item an orientation of the edges $\{e_{i+1},\dots,e_m\}$.
\end{itemize}
Recall that an NBC subset of edges is automatically acyclic.
Thus $\cM_0$ and $\cM_m$ consist of the acyclic orientations and NBC spanning subgraphs of $G$, respectively. We establish the proposition by constructing bijections $\phi_i:\cM_{i-1}\to\cM_i$ for all indices $1\le i\le m$.

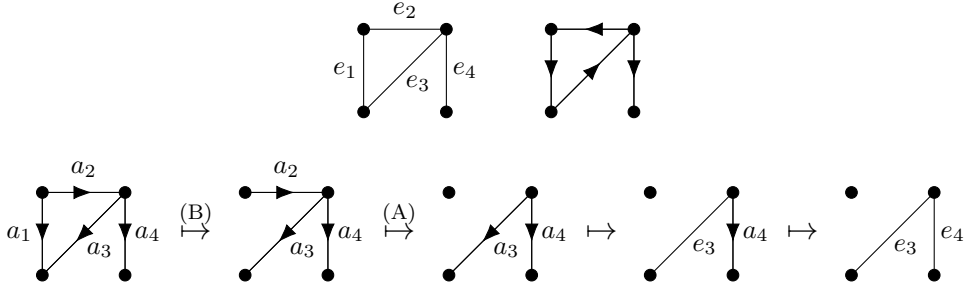
\begin{figure}
\begin{center}
\begin{footnotesize}
	\begin{tikzpicture}[scale=1.1]
	%\draw(-.8,.5) node{$G=$};
	\draw (0,0)--(1,1)--(0,1)--(0,0) (1,0)--(1,1);
	\draw [fill=black] (0,1) circle (2pt);
	\draw [fill=black] (1,1) circle (2pt);
	\draw [fill=black] (0,0) circle (2pt);
	\draw [fill=black] (1,0) circle (2pt);
	\node [left, circle, inner sep=1pt] at (0,0.5) {$e_1$};
	\node [above, circle, inner sep=1pt] at (0.5,1) {$e_2$};
	\node [below right, circle, inner sep=1pt] at (0.5,0.5) {$e_3$};
	\node [right, circle, inner sep=1pt] at (1,0.5) {$e_4$};
	\end{tikzpicture}
\qquad
	\begin{tikzpicture}[scale=1.1]
	%\draw(-.8,.5) node{$O=$};
	\draw [->-] (0,0)--(1,1);
	\draw [->-] (0,1)--(0,0);
	\draw [->-] (1,1)--(0,1);
	\draw [->-] (1,1)--(1,0);
	\draw (1,1)--(0,0)--(0,1)--(1,1)--(1,0);
	\draw [fill=black] (0,1) circle (2pt);
	\draw [fill=black] (1,1) circle (2pt);
	\draw [fill=black] (0,0) circle (2pt);
	\draw [fill=black] (1,0) circle (2pt);
	\end{tikzpicture}
\\[10pt]
	\begin{tikzpicture}[scale=1.1]
%	\draw(-.8,.5) node{$M_0=$};
	\draw [->-] (1,1)--(0,0);
	\draw [->-] (0,1)--(0,0);
	\draw [->-] (0,1)--(1,1);
	\draw [->-] (1,1)--(1,0);
	\draw (1,1)--(0,0)--(0,1)--(1,1)--(1,0);
	\draw [fill=black] (0,1) circle (2pt);
	\draw [fill=black] (1,1) circle (2pt);
	\draw [fill=black] (0,0) circle (2pt);
	\draw [fill=black] (1,0) circle (2pt);
	\node [left, circle, inner sep=2pt] at (0,0.5) {$a_1$};
	\node [above, circle, inner sep=2pt] at (0.5,1) {$a_2$};
	\node [below right, circle, inner sep=2pt] at (0.5,0.5) {$a_3$};
	\node [right, circle, inner sep=2pt] at (1,0.5) {$a_4$};
	\end{tikzpicture}
\raisebox{16pt}{\normalsize $\stackrel{\text{(B)}}{\mapsto}$}\quad
	\begin{tikzpicture}[scale=1.1]
%	\draw(-.8,.5) node{$M_1=$};
	\draw [->-] (1,1)--(0,0);
	\draw [->-] (0,1)--(1,1);
	\draw [->-] (1,1)--(1,0);
	\draw (1,1)--(0,0) (0,1)--(1,1)--(1,0);
	\draw [fill=black] (0,1) circle (2pt);
	\draw [fill=black] (1,1) circle (2pt);
	\draw [fill=black] (0,0) circle (2pt);
	\draw [fill=black] (1,0) circle (2pt);
	\node [above, circle, inner sep=2pt] at (0.5,1) {$a_2$};
	\node [below right, circle, inner sep=2pt] at (0.5,0.5) {$a_3$};
	\node [right, circle, inner sep=2pt] at (1,0.5) {$a_4$};
	\end{tikzpicture}
\raisebox{16pt}{\normalsize $\stackrel{\text{(A)}}{\mapsto}$}\quad
	\begin{tikzpicture}[scale=1.1]
%	\draw(-.8,.5) node{$M_2=$};
	\draw [->-] (1,1)--(0,0);
	\draw [->-] (1,1)--(1,0);
	\draw (1,1)--(0,0) (1,1)--(1,0);
	\draw [fill=black] (0,1) circle (2pt);
	\draw [fill=black] (1,1) circle (2pt);
	\draw [fill=black] (0,0) circle (2pt);
	\draw [fill=black] (1,0) circle (2pt);
	\node [below right, circle, inner sep=2pt] at (0.5,0.5) {$a_3$};
	\node [right, circle, inner sep=2pt] at (1,0.5) {$a_4$};
	\end{tikzpicture}
\raisebox{16pt}{\normalsize $\mapsto$}\quad
	\begin{tikzpicture}[scale=1.1]
%	\draw(-.8,.5) node{$M_3=$};
	\draw [->-] (1,1)--(1,0);
	\draw (1,1)--(0,0) (1,1)--(1,0);
	\draw [fill=black] (0,1) circle (2pt);
	\draw [fill=black] (1,1) circle (2pt);
	\draw [fill=black] (0,0) circle (2pt);
	\draw [fill=black] (1,0) circle (2pt);
	\node [below right, circle, inner sep=1pt] at (0.5,0.5) {$e_3$};
	\node [right, circle, inner sep=2pt] at (1,0.5) {$a_4$};
	\end{tikzpicture}
\raisebox{16pt}{\normalsize $\mapsto$}\quad
	\begin{tikzpicture}[scale=1.1]
%	\draw(-.8,.5) node{$M_4=$};
	\draw (0,0)--(1,1)--(1,0);
	\draw [fill=black] (0,1) circle (2pt);
	\draw [fill=black] (1,1) circle (2pt);
	\draw [fill=black] (0,0) circle (2pt);
	\draw [fill=black] (1,0) circle (2pt);
	\node [below right, circle, inner sep=1pt] at (0.5,0.5) {$e_3$};
	\node [right, circle, inner sep=1pt] at (1,0.5) {$e_4$};
	\end{tikzpicture}
\caption{Applying the mappings $\phi_1$, $\phi_2$, $\phi_3$, and $\phi_4$ to an acyclic orientation of the graph shown in the top left, where the normal orientations of the edges are as shown in the top right.}
\label{psiEx}
\end{footnotesize}
\end{center}
\end{figure}

To define these bijections, we need to distinguish between the two orientations of each edge of $G$. How we make this distinction is immaterial, so we simply fix one orientation of each edge $e_i$ as \emph{normal}, denoted by $\ora{e_i}$, and  call the opposite orientation $\ola{e_i}$ \emph{abnormal}.
We may now define $\phi_i:\cM_{i-1}\to\cM_i$.
Take $M\in\cM_{i-1}$ and suppose that the edge $e_i$ of $G$ appears as the arc $a_i$ in $M$. The mixed graph $\phi_i(M)$ is obtained by either \emph{unorienting} $a_i$ (replacing the arc $a_i$ with the edge $e_i$) or \emph{removing} it (deleting the arc $a_i$). The rules by which we make this choice are as follows.  We unorient $a_i$ if both
\begin{enumerate}
	\item[(A)] $a_i$ is the normal orientation of $e_i$ and
	\item[(B)] unorienting $a_i$ does not create a cycle.
\end{enumerate}
Otherwise, we remove $a_i$.
An example of applying the mappings $\phi_i$ is shown in Figure~\ref{psiEx}, where the label above each arrow indicates which of (A) or (B) was violated if the arc was removed (there is no such label if the arc satisfied both (A) and (B) and thus was unoriented instead).

We must check that $\phi_i$ produces members of $\cM_i$. It follows immediately that $M'=\phi_i(M)$ is acyclic because rule (B) prevents the creation of a cycle. 
However, how does $\phi_i$ avoid creating broken circuits?
The answer is that were $M'$ to contain a broken circuit, then---since $M\in\cM_{i-1}$ and thus does not itself contain a broken circuit---the last two edges in the corresponding cycle of $G$ must be $e_i$ and $e_j$ for some $j>i$. However, $M$ contains some orientation of $e_j$, so unorienting $a_i$ would result in a cycle. Therefore rule (B) ensures that $a_i$ is removed, and thus $M'$ cannot contain a broken circuit. This verifies that $M'\in\cM_i$.

Our proof is completed by constructing the inverse of $\phi_i$, which we denote by $\psi_i:\cM_i\to\cM_{i-1}$. Given a mixed graph $M'\in\cM_i$, the mixed graph $M=\psi_i(M')$ is obtained by adding one of the orientations of edge $e_i$ to $M'$ (and removing $e_i$ if it is present in $M'$).
We give $e_i$ the \emph{ab}normal orientation if both
\begin{enumerate}
\item[($\text{A}'$)] $e_i$ is not an edge of $M'$ and
\item[($\text{B}'$)] adding $\ola{e_i}$ to $M'$ does not create a cycle.
\end{enumerate}
Otherwise, we give $e_i$ the normal orientation.

Again we must check that $M\in\cM_{i-1}$. Orienting an edge cannot cause $M$ to contain an NBC set, so we need only check that $M$ is acyclic.
This follows from ($\text{B}'$) if $e_i$ was oriented abnormally. If $e_i$ was oriented normally, then either ($\text{A}'$) or ($\text{B}'$) was violated.
If it was ($\text{A}'$) that was violated, then $e_i\in M'$, and $M$ is acyclic because orienting an edge that is already present cannot create a cycle.

This leaves us to consider the case where $e_i$ was oriented normally because condition ($\text{B}'$) was violated. Suppose $\ola{e_i}=\ola{uv}$, so the fact that condition ($\text{B}'$) was violated means that $M'$ contains a path from $u$ to $v$, which we denote by $P$. If $P$ were to consist entirely of edges, then $P\subseteq\{e_1,e_2,\dots,e_{i-1}\}$ because $M'\in\cM_i$. However, in this case $e_i$ is the greatest edge of the cycle $P\cup e_i$ of $G$, so $P$ is a broken circuit. As this contradicts our assumption that $M'\in\cM_i$, it follows that $P$ must traverse at least one arc. Therefore, if $M'$ were to also contain a path from $v$ to $u$, then this path together with $P$ would be a closed walk that traversed at least one arc. Proposition~\ref{prop-closed-walk} would then imply that $M'$ contains a cycle, a contradiction. We may therefore conclude that $M'$ does not contain a path from $v$ to $u$. Thus orienting $e_i$ as $\ora{e_i}=\ora{uv}$, as it is oriented in $M=\psi_i(M')$, does not create a cycle.

It remains only to check that $\psi_i\circ\phi_i$ and $\phi_i\circ\psi_i$ are the identity mappings on $\cM_{i-1}$ and $\cM_i$, respectively. We show only one of these, as the other is similar.
Consider $\psi_i(\phi_i(M))$ where $M\in \cM_{i-1}$, and let $M'=\phi_i(M)$.
There are two cases, depending on whether $M'$ is obtained from $M$ by unorienting $e_i$ or by removing it.
In the first case $e_i$ was oriented normally in $M$ and $e_i$ is an edge of $M'$. Thus ($\text{A}'$) is violated and $\psi(M')$ is obtained by orienting $e_i$ normally, as desired.

For the second case, suppose that $e_i$ was removed, so either (A) or (B) was violated.
If (A) was violated, then $e_i$ was abnormally oriented in $M$ and we must show that $\psi_i(M')$ adds the arc back in that orientation, or in other words, that both ($\text{A}'$) and ($\text{B}'$) are satisfied.
We know that ($\text{A}'$) is satisfied because $e_i$ was removed from $M$ due to its abnormal orientation.
We also see that ($\text{B}'$) holds because orienting $e_i$ as $\ola{e_i}$ results in the mixed graph $M$ which was assumed to be acyclic.
For the final subcase, assume that (A) holds but (B) does not. This means that $e_i$ was oriented normally in $M$ but was removed because (B) was violated. Thus the cycle that violates (B) must be created by the possibility of traversing $e_i$ in its abnormal orientation since, by assumption, the normal orientation of $e_i$ is part of $M$ which is acyclic.
In this case ($\text{B}'$) is violated, and $e_i$ is added back to $M'$ in its normal orientation, as desired.
\end{proof}

\section{Multi-colored acyclic orientations.}
\label{sec-acyclic2}

Proposition~\ref{ComPro} is a consequence of a more general result in Stanley's 1973 paper~\cite{sta:aog}. Here, by adapting an idea essentially due to Vo~\cite[Section~3]{vo:graph-colorings:}, we show how this more general result follows bijectively from Proposition~\ref{ComPro}, and then establish the chromatic symmetric function analogue. Observe that it is not at all clear algebraically that Proposition~\ref{ComPro} implies the more general Theorem~\ref{ComThm}, but viewing the results combinatorially makes this implication possible.

Suppose that $O=(V,A)$ is an orientation of the graph $G=(V,E)$ and that $\ka$ is a coloring of $G$ (by $[t]$ or by $\mathbb{P}$).
We say that
$O$ and $\ka$ are \emph{compatible} if
%the orientation $O$ is \emph{compatible} with $\ka$ if
%We call the pair $(O,\ka)$ \emph{compatible} if
\[
	\ora{uv}\in A
	\text{ implies }
	\ka(u)\le \ka(v),
\]
that is, if all the arcs of $O$ point in the direction of weakly increasing values of $\ka$. It is convenient to display such pairs by labeling the vertices of $O$ with their values under $\ka$. Using this convention, we see a compatible pair on the right in Figure~\ref{gao}.
We can now state the generalization of  Proposition~\ref{ComPro} to all negative values of $t$.

\begin{thm}[Stanley~{\cite[Theorem~1.2]{sta:aog}}]
\label{ComThm}
For every graph $G=(V,E)$ and every positive integer $t$ we have
\begin{multline*}
	(-1)^{|V|}\chi(G;-t)
	=\\
	|\{(O,\ka) \mid \text{$O$ is an acyclic orientation of $G$ and $\ka$  a compatible $[t]$-coloring}\}|.
\end{multline*}
\end{thm}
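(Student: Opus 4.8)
The plan is to deduce Theorem~\ref{ComThm} from the special case $t=-1$ (Proposition~\ref{ComPro}) by \emph{splitting a compatible pair according to its color classes}. Fix a total ordering of $E$. By Corollary~\ref{WhiCor} we have
\[
	(-1)^{|V|}\chi(G;-t)
	=
	\sum_{\substack{S\subseteq E\\ S\text{ NBC}}} t^{c(S)},
\]
and by Lemma~\ref{IM(S;[t])} the right-hand side is exactly the cardinality of the set $\cB$ of pairs $(S,\ka)$ where $S$ is an NBC spanning subgraph of $G$ and $\ka:V\to[t]$ is monochromatic on the components of $S$. So it suffices to build a bijection between $\cB$ and the set $\cA$ of pairs $(O,\ka)$ with $O$ an acyclic orientation of $G$ and $\ka$ a compatible $[t]$-coloring.

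First I would decompose $\cA$ along color classes. Given $(O,\ka)\in\cA$, set $V_i=\ka^{-1}(i)$. Every arc of $O$ joining two different classes is forced by compatibility to point from the smaller-indexed class to the larger, so $O$ is determined by $\ka$ together with the orientations $O_i$ it induces on the subgraphs $G[V_i]$. The key observation is that, since $\ka$ is weakly increasing along every arc, every directed cycle of $O$ has all its vertices in a single class; hence $O$ is acyclic if and only if each $O_i$ is an acyclic orientation of $G[V_i]$. This gives a bijection between $\cA$ and the set of data consisting of a function $\ka:V\to[t]$ together with an acyclic orientation of $G[\ka^{-1}(i)]$ for each $i\in[t]$.

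Next I would transfer and reassemble. Applying Proposition~\ref{ComPro}---more precisely, the bijection between acyclic orientations and NBC spanning subgraphs constructed in its proof, using on each $G[V_i]$ the ordering of its edges inherited from that of $E$---replaces the acyclic orientation of each $G[V_i]$ by an NBC spanning subgraph $S_i$ of $G[V_i]$. I would then send $\bigl(\ka,(S_1,\dots,S_t)\bigr)$ to the pair $(S,\ka)$ with $S=S_1\cup\cdots\cup S_t$, a disjoint union of edge sets; conversely, from $(S,\ka)\in\cB$ one recovers $V_i=\ka^{-1}(i)$ and $S_i=S\cap E(G[V_i])$, and since $\ka$ is monochromatic on the components of $S$ every edge of $S$ lies inside a single class, so $S=\bigsqcup_i S_i$. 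Composing these three bijections yields the map $\cA\to\cB$, which together with the displayed identity proves the theorem.

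I expect the main obstacle to be checking that NBC-ness is respected by the reassembly, in both directions. Suppose $B=C-\max C$ were a broken circuit of $G$ contained in $S=\bigcup_i S_i$: every edge of $B$ then lies inside some color class, and since $B$ is a path through all the vertices of $C$, consecutive vertices agree in color, so \emph{every} vertex of $C$ gets one and the same color $i$. Hence $C\subseteq E(G[V_i])$, and because the edge ordering on $G[V_i]$ is the restriction of the one on $G$ we have $\max C=\max_{G[V_i]}C$, so $B$ is a broken circuit of $G[V_i]$ contained in $S_i$, contradicting that $S_i$ is NBC; this verifies the forward direction, and the reverse direction is the same computation run backwards. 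The remaining verifications (that $\ka$ is monochromatic on the components of $S$, and that the two assignments are mutually inverse) are routine, as is confirming that the orderings used on the $G[V_i]$ are consistent throughout.
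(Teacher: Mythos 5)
Your proposal is correct and follows essentially the same route as the paper's own proof: interpret $(-1)^{|V|}\chi(G;-t)$ via Corollary~\ref{WhiCor} and Lemma~\ref{IM(S;[t])} as counting pairs $(S,\ka)$, fix the coloring $\ka$, split both objects along the color classes $V_i=\ka^{-1}(i)$ (the between-class arcs being forced by compatibility), and apply the $t=1$ bijection of Proposition~\ref{ComPro} to each class before reassembling. If anything, your verification that broken circuits of $G$ restrict to broken circuits of the monochromatic subgraphs (because the offending cycle is monochromatic and the edge ordering is inherited) is spelled out more explicitly than in the paper, which compresses that step.
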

%compatible pairs $(O,\ka)$ where $O$ is an acyclic orientation and $\ka$ a $[t]$-coloring
\begin{proof}
Fix a total ordering of the edges of $G$, so by Corollary~\ref{WhiCor} we have
\[
	(-1)^{|V|}\chi(G;-t)
	=
	\sum_{\substack{S\subseteq E,\\\textup{$S$ is NBC}\rule{0pt}{7pt}}}
		t^{c(S)}.
\]
Lemma~\ref{IM(S;[t])} shows that this sum counts pairs $(S,\ka)$ where $S$ is an NBC spanning subgraph of $G$ and $\ka$ is a $[t]$-coloring that is monochromatic on the components of $S$. We would like to construct a bijection between such pairs and pairs of the form $(O,\ka)$, where $O$ is an acyclic orientation of $G$ and $\ka$ is a compatible $[t]$-coloring.
To prove the theorem, we construct, for every $[t]$-coloring $\ka$ of $G$, a bijection $\Phi$ between the sets
\begin{eqnarray*}
	\cO_\ka&=&\{O:\text{$O$ is an acyclic orientation of $G$ compatible with $\ka$}\}\ \text{and}\\
	\cS_\ka&=&\{S:\text{$S\subseteq E$ is NBC and $\ka$ is monochromatic on the components of $S$}\}.
\end{eqnarray*}

Note that when $t=1$, there is only one coloring of $G$ (the constant coloring), and every acyclic orientation is compatible with this coloring. Thus in this case $\Phi$ must restrict to a bijection from acyclic orientations of $G$ to its NBC spanning subgraphs. Indeed, we constructed just such a bijection in the previous section, although any such bijection will do for our purposes here.

For each $i\in[t]$, define
\[
	V_i=\{v\in V\mid \ka(v)=i\},
\]
and let $E_i$ denote the set of (monochromatic) edges of $G$ between vertices in $V_i$. Note that $E_1\cup E_2\cup\cdots\cup E_t$ needn't contain all of the edges of $G$, but for every $S\in\cS_\ka$ we have
\[
	S\subseteq E_1\cup E_2\cup\cdots\cup E_t
\]
because $\ka$ is monochromatic on the components of $S$. Similarly, for each orientation $O\in\cO_\ka$, we know how the edges outside $E_1\cup E_2\cup\cdots\cup E_t$ are oriented because $O$ and $\ka$ are compatible.
Indeed, any edge $uv$ where $u\in V_i$, $v\in V_j$, and $i<j$ must be oriented as $\ora{uv}$.
Let $C$ denote the set of these arcs.

We may now define $\Phi$. Take some orientation $O\in\cO_\ka$, and for each $i\in[t]$, let $O_i$ denote the sub-orientation consisting of all arcs of $O$ between vertices of color $i$, so $O_i$ is an orientation of the spanning subgraph $E_i$. Because we are assuming Proposition~\ref{ComPro}, we know that for each $E_i$, there is a bijection from the acyclic orientations of $E_i$ to its NBC spanning subgraphs. Let this bijection be denoted by $\phi^{(i)}$.
%Technically, for each $i\in[t]$ we have a different bijection, but we hope that no confusion will arise by denoting all of these bijections by $\phi$.
Thus $\phi^{(i)}(O_i)$ is an NBC spanning subgraph of $E_i$ for every $i\in[t]$. We may now define $\Phi(O)$ simply by
\[
	\Phi(O)
	=
	\phi^{(1)}(O_1)\cup\phi^{(2)}(O_2)\cup\cdots\cup\phi^{(t)}(O_t).
\]

Note that the components of $\phi^{(i)}(O_i)$ are subsets of $E_i$. This implies that $\ka$ is monochromatic on the components of $\Phi(O)$. This fact also allows us to see that $\Phi(O)$ is an NBC spanning subgraph---if $\Phi(O)$ were to contain a broken circuit, then that broken circuit would lie in a single component, and thus it would be contained in $\phi^{(i)}(O_i)$ for some $i$, but this is a contradiction. It follows that $\Phi(O)\in\cS_\ka$.

Next we construct the inverse of $\Phi$, which we call $\Psi$.
To define it, consider an NBC spanning subgraph $S\in\cS_\ka$. Because $\ka$ is monochromatic on the components of $S$, we know that $S\subseteq E_1\cup E_2\cup\cdots\cup E_t$. For each $i\in[t]$, let $S_i=S\cap E_i$, so $S_i$ is an NBC spanning subgraph of $E_i$. 
For each $i\in[t]$, let $\psi^{(i)}$ denote the inverse of the bijection $\phi^{(i)}$. We then define $\Psi(S)$ by
\[
\Psi(S)
	=
	C\cup\psi^{(1)}(S_1)\cup\psi^{(2)}(S_2)\cup\cdots\cup\psi^{(t)}(S_t)
\]
where $C$ is the set of arcs defined earlier (whose orientations are determined by compatibility).

We need to make sure that $\Psi(S)$ is acyclic. Suppose, toward a contradiction, that this orientation contains a cycle. The vertices visited by this cycle cannot all have the same color since each $\psi^{(i)}(S_i)$ is acyclic. Thus we may label the vertices visited by the cycle as $v_0$, $v_1$, $v_2$, $\dots$, $v_k=v_0$ where $\ka(v_0)<\ka(v_1)$. However, the fact that $\ka$ is compatible with this orientation implies that
\[
	\ka(v_0)<\ka(v_1)\le \ka(v_2)\le\dots\le\ka(v_k)=\ka(v_0).
\]
This contradiction---that $\ka(v_0)<\ka(v_0)$---establishes that the orientation is acyclic, showing that $\Psi(S)\in\cO_\ka$. It follows that $\Phi$ is a bijection with inverse $\Psi$ since the analogous statement is true of the $\phi^{(i)}$ and $\psi^{(i)}$. This completes the proof.
\end{proof}

To state the symmetric function analogue of Theorem~\ref{ComThm}, we first need to introduce a well-known involution $\om$ on the ring of symmetric functions. When $\la=(\la_1,\dots,\la_\ell)$ is a partition of $n$ into $\ell$ parts, we define
\[
	\om(p_\la)
	=
%	\om(p_{\la_1})\cdots\om(p_{\la_l})
%	=
%	(-1)^{\la_1-1}\cdots(-1)^{\la_l-1}p_\la
%	=
	(-1)^{n-\ell} p_\la.
\]
We then extend $\om$ to the entire ring linearly.

%As usual, we can give a bijective proof of the symmetric function analogue of Theorem~\ref{ComThm}. To state it, we need a well-known involution $\om$ on the algebra of symmetric functions. Define this map on the power sum symmetric functions by letting
%\[
%	\om(p_n) =(-1)^{n-1} p_n,
%\]
%and extending algebraically. So if $\la=(\la_1,\dots,\la_l)$ is a partition of $n$ then
%\beq
%	\label{om(p_la)}
%	\om(p_\la)
%	=
%	\om(p_{\la_1})\cdots\om(p_{\la_l})
%	=
%	(-1)^{\la_1-1}\cdots(-1)^{\la_l-1}p_\la
%	=
%	(-1)^{n-l} p_\la.
%\eeq

\begin{thm}[Stanley~{\cite[Theorem~4.2]{sta:sfg}}]
For every graph $G$, we have
\[
	\om(X(G;\bx)) = \sum_{(O,\,\ka)} \bx^\ka,
\]
where the sum is over all pairs $(O,\ka)$ where $O$ is an acyclic orientation of $G$ that is compatible with the $\mathbb{P}$-coloring $\ka$.
\end{thm}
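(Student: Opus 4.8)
The plan is to follow the template used for the earlier pairs of results: expand $\om(X(G;\bx))$ into a signless sum of power sums over NBC spanning subgraphs, interpret that sum combinatorially via Lemma~\ref{IM(S;x)}, and then transport the resulting pairs to acyclic orientations by reusing the bijection constructed in the proof of Theorem~\ref{ComThm}.

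First I would apply $\om$ to the expansion $X(G;\bx)=\sum_{S}(-1)^{|S|}p_{\la(S)}$ from Theorem~\ref{XNBC}, where $S$ ranges over the NBC spanning subgraphs of $G$. Since $\la(S)$ is a partition of $|V|$ into $c(S)$ parts, the definition of $\om$ gives $\om(p_{\la(S)})=(-1)^{|V|-c(S)}p_{\la(S)}$, and since an NBC spanning subgraph is a forest we have $|S|=|V|-c(S)$. Hence $(-1)^{|S|}(-1)^{|V|-c(S)}=1$ for every NBC spanning subgraph $S$, so, using linearity of $\om$, all the signs disappear and
\[
	\om(X(G;\bx))=\sum_{S\textup{ NBC}}p_{\la(S)}.
\]
Next, Lemma~\ref{IM(S;x)} rewrites each $p_{\la(S)}$ as $\sum_{\ka}\bx^\ka$, summed over the $\mathbb{P}$-colorings $\ka$ that are monochromatic on the components of $S$. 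Substituting and regrouping the resulting double sum by the coloring $\ka$ (which is legitimate because $X(G;\bx)$ has finite degree $|V|$, so each monomial receives only finitely many contributions) yields
\[
	\om(X(G;\bx))=\sum_{\ka}|\cS_\ka|\,\bx^\ka,
\]
where $\cS_\ka=\{S:\text{$S$ is NBC and $\ka$ is monochromatic on the components of $S$}\}$, exactly as in the proof of Theorem~\ref{ComThm}.

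It remains to replace $|\cS_\ka|$ by $|\cO_\ka|$, the number of acyclic orientations of $G$ compatible with $\ka$. This is supplied by the bijection $\Phi\colon\cO_\ka\to\cS_\ka$ constructed in the proof of Theorem~\ref{ComThm}: that construction only ever refers to the finitely many nonempty color classes $V_i$ and their induced edge sets $E_i$, so it applies verbatim to an arbitrary $\mathbb{P}$-coloring of the finite graph $G$, not merely a $[t]$-coloring. Since $\Phi$ does not alter $\ka$, summing over $\ka$ gives $\sum_{\ka}|\cS_\ka|\,\bx^\ka=\sum_{\ka}|\cO_\ka|\,\bx^\ka=\sum_{(O,\,\ka)}\bx^\ka$, which is the claimed identity. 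I do not anticipate a real obstacle here; the only points needing care are the sign cancellation (which rests on the NBC-implies-forest fact already exploited for Corollary~\ref{WhiCor}) and the observation that the bijection of the previous section extends to $\mathbb{P}$-colorings, which is immediate since a coloring of a finite graph uses only finitely many colors.
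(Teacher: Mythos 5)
Your proposal is correct and follows essentially the same route as the paper: apply $\om$ to the NBC expansion of Theorem~\ref{XNBC}, cancel signs using the fact that NBC subgraphs are forests, interpret the resulting sum via Lemma~\ref{IM(S;x)}, and transfer to acyclic orientations with the color-preserving bijection from the proof of Theorem~\ref{ComThm}. Your explicit remarks on regrouping by $\ka$ and on extending the bijection from $[t]$-colorings to $\mathbb{P}$-colorings are just slightly more detailed versions of points the paper leaves implicit.
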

\begin{proof}
From Theorem~\ref{XNBC} we have that
\[
	X(G;\bx)
	=
	\sum_{\substack{S\subseteq E,\\\textup{$S$ is NBC}\rule{0pt}{7pt}}}
		(-1)^{|S|} p_{\lambda(S)}.
\]
All spanning subgraphs $S$ appearing in this sum are forests (because they are NBC), so $\lambda(S)$ is a partition of $|V|$ into $|V|-|S|$ parts. Thus, applying $\om$ to the equation above yields
\[
	\om(X(G;\bx))
	=
	\sum_{\substack{S\subseteq E,\\\textup{$S$ is NBC}\rule{0pt}{7pt}}}
		(-1)^{|S|} (-1)^{|V|-(|V|-|S|)} p_{\lambda(S)}
	=
	\sum_{\substack{S\subseteq E,\\\textup{$S$ is NBC}\rule{0pt}{7pt}}}
		p_{\lambda(S)}.
\]
By Lemma~\ref{IM(S;x)}, the right-hand side of this equation is the weight generating function for pairs $(S,\ka)$ where $S$ is an NBC set and $\ka$ is monochromatic on the components of $S$. Since the bijection $\Psi$ of the previous proof preserves colorings, it also preserves the quantity $\bx^\ka$, so this bijection shows that
\[
	\sum_{\substack{S\subseteq E,\\\textup{$S$ is NBC}\rule{0pt}{7pt}}}
		p_{\lambda(S)}
	=
	\sum_{(O,\,\ka)} \bx^\ka,
\]
finishing the proof of the theorem.
\end{proof}

%In concluding this section we note that Theorem~\ref{ComThm} is not the only connection between graph colorings and orientations. The Gallai--Hasse--Roy--Vitaver theorem~\cite{gallai:on-directed-gra:,hasse:zur-algebraisch:,roy:nombre-chromati:,vitaver:determination-o:} states that a graph has a proper $[t]$-coloring if and only if it has an acyclic orientation in which no path contains more than $t$ vertices.

%
%
%
%
%

\section{Concluding remarks.}

Even after more than a century of study, several questions about the chromatic polynomial remain open. To name a particularly natural and tantalizing one, which was raised by Read in his 1968 survey~\cite[Section~9]{rea:icp} and then again (independently, it seems) by Wilf~\cite{wil:wpc} eight years later: which polynomials are chromatic polynomials? Of course, the analogous question for chromatic symmetric functions is just as natural.

Although the problem of characterizing the chromatic polynomials remains unsolved, we at least have a description of their coefficients via Theorem~\ref{WhiThm}. In particular, since every NBC set $S$ is a forest, if $|V|=n$ and $|S|=k$, then $c(S)=n-k$. Thus Whitney's theorem shows that
\[
	\chi(G;t)=\sum_{k\ge0} (-1)^k a_k t^{n-k}
\]
where $a_k$ is the number of NBC spanning subgraphs of $G$ with $k$ edges.  One can now ask if the coefficient sequence $a_0, a_1,\ldots, a_n$ has any interesting properties.  Call such a sequence of real numbers {\em log-concave} if
\[
	a_k^2 \ge a_{k-1} a_{k+1}
\]
for all $0<k<n$.  Log-concave sequences abound in combinatorics, algebra, and geometry, as can be seen from the survey articles of Stanley~\cite{sta:lus}, Brenti~\cite{bre:lus}, and Br\"and\'en~\cite{bra:ulr}.  In 2012, Huh~\cite{huh:mnp} stunned the combinatorial community by using deep results from algebrac geometry to prove that the coefficient sequence of $\chi(G;t)$ is always log-concave.  A weaker property had been conjectured by Read~\cite{rea:icp} in 1968.

As we have seen, mixed graphs arise naturally in the study of the chromatic polynomial. Beck, Bogart, and Pham~\cite{beck:enumeration-of-:} have extended the definition of chromatic polynomials to this context. Specifically, they define the \emph{strong chromatic polynomial} of the mixed graph $G=(V,E,A)$ to be the number of $[t]$-colorings $\ka$ satisfying
\begin{enumerate}
	\item[(a)] $\ka(u)\neq\ka(v)$ if $uv\in E$ and
	\item[(b)] $\ka(u)<\ka(v)$ if $\ora{uv}\in A$.
\end{enumerate}
They show that this is a polynomial in $t$. They also prove an analogue of Proposition~\ref{ComPro} for these polynomials, phrased in the language of hyperplane arrangements.
(Given any graph $G$ with vertex set $[n]$, one associates a set of hyperplanes where $ij\in E$ corresponds to the hyperplane $x_i=x_j$. It is then easy to see that the acyclic orientations of $G$ correspond to the regions obtained by removing the hyperplanes from $\mathbb{R}^n$.) Beck, Blado, Crawford, Jean-Louis, and Young~\cite{beck:on-weak-chromat:} later introduced the \emph{weak chromatic polynomial} of a mixed graph, which counts colorings satisfying the weak version of the inequality in (b). They establish a mixed graph analogue of Theorem~\ref{ComThm} for these polynomials.

%They then established a mixed-graph analogue of Theorem~\ref{ComThm}.  It would be interesting to see which of the other results in this paper can be extended to mixed graphs, hopefully with bijective proofs.  And there should be analogous results for the obviously defined mixed graph symmetric functions.

\begin{figure}
\begin{center}
	\begin{tikzpicture}[scale=1.1, baseline=(current bounding box.center)]
%	\draw(-.8,.5) node{$M_1=$};
	\draw (0,0)--(30:1)--(-30:1)--cycle;
	\draw (0,0)--(150:1)--(210:1)--cycle;
	\draw [fill=black] (0,0) circle (2pt);
	\draw [fill=black] (30:1) circle (2pt);
	\draw [fill=black] (-30:1) circle (2pt);
	\draw [fill=black] (150:1) circle (2pt);
	\draw [fill=black] (210:1) circle (2pt);
	\end{tikzpicture}
\quad\quad
	\begin{tikzpicture}[scale=1.1, baseline=(current bounding box.center)]
%	\draw(-.8,.5) node{$M_2=$};
	\draw ({-sqrt(3)/2},0)--(0,{1/2})--({sqrt(3)/2},0)--(0,{-1/2})--cycle;
	\draw (0,{1/2})--(0,{-1/2});
	\draw ({sqrt(3)/2},0)--({1+sqrt(3)/2},0);
	\draw [fill=black] ({1+sqrt(3)/2},0) circle (2pt);
	\draw [fill=black] ({-sqrt(3)/2},0) circle (2pt);
	\draw [fill=black] (0,{1/2}) circle (2pt);
	\draw [fill=black] ({sqrt(3)/2},0) circle (2pt);
	\draw [fill=black] (0,{-1/2}) circle (2pt);
	\end{tikzpicture}
	\begin{tikzpicture}[scale=1.1, baseline=(current bounding box.center)]
%	\draw(-.8,.5) node{$M_3=$};
	\draw (90:{1/sqrt(3)})--(210:{1/sqrt(3)})--(330:{1/sqrt(3)})--cycle;
	\draw (90:{1/sqrt(3)})--(90:{1+1/sqrt(3)});
	\draw (210:{1/sqrt(3)})--(210:{1+1/sqrt(3)});
	\draw (330:{1/sqrt(3)})--(330:{1+1/sqrt(3)});
	\draw [fill=black] (90:{1/sqrt(3)}) circle (2pt);
	\draw [fill=black] (210:{1/sqrt(3)}) circle (2pt);
	\draw [fill=black] (330:{1/sqrt(3)}) circle (2pt);
	\draw [fill=black] (90:{1+1/sqrt(3)}) circle (2pt);
	\draw [fill=black] (210:{1+1/sqrt(3)}) circle (2pt);
	\draw [fill=black] (330:{1+1/sqrt(3)}) circle (2pt);
	\end{tikzpicture}
\quad\quad
	\begin{tikzpicture}[scale=1.1, baseline=(current bounding box.center)]
%	\draw(-.8,.5) node{$M_3=$};
	\draw (0,0)--(-60:1)--(240:1)--cycle;
	\draw (0,0)--(120:1);
	\draw (0,0)--(60:2);
	\draw [fill=black] (0,0) circle (2pt);
	\draw [fill=black] (-60:1) circle (2pt);
	\draw [fill=black] (240:1) circle (2pt);
	\draw [fill=black] (120:1) circle (2pt);
	\draw [fill=black] (60:1) circle (2pt);
	\draw [fill=black] (60:2) circle (2pt);
	\end{tikzpicture}
\end{center}
\caption{From left to right, the butterfly (or bowtie), the kite, the net, and $X_{169}$.}
\label{fig-Xequal}
\end{figure}
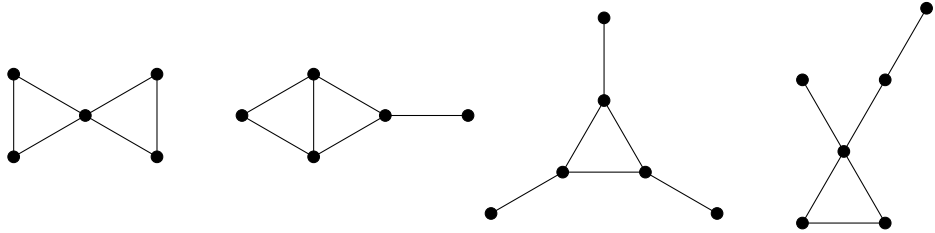

Finally, we would be remiss if we did not mention a question of Stanley's
%original paper on the chromatic symmetric function
that remains open despite significant interest. First, in terms of the definitions we have established, a \emph{tree} is a forest with a single connected component. For any tree $T$ on $n$ vertices, it is easy to see that $\chi(T;t)=t(t-1)^{n-1}$, so we say that the chromatic polynomial does not distinguish trees. However, Stanley~\cite[p.~170]{sta:sfg} asked whether the opposite is true for the chromatic symmetric function. Specifically, if $T_1$ and $T_2$ are nonisomorphic trees, is it true that $X(T_1;\bx)\neq X(T_2;\bx)$? Stanley repeated this question as Exercise~7.47.b in Volume 2 of \emph{Enumerative Combinatorics}~\cite{stanley:enumerative-com:2}. It has since become known as the tree conjecture. A series of researchers have verified the tree conjecture by computer for small trees; at present the record belongs to Heil and Ji~\cite{heil:on-an-algorithm:}, who have shown that the conjecture holds for all trees on 29 or fewer vertices. If true, the tree conjecture would imply that the chromatic symmetric function is a \emph{complete tree invariant}.

Note that the chromatic symmetric function does \emph{not} distinguish all graphs. The example Stanley~\cite{sta:sfg} gives is that the chromatic symmetric functions of the butterfly and the kite are equal (see Figure~\ref{fig-Xequal}). Orellana and Scott~\cite{os:gec} have observed that the chromatic symmetric function also does not distinguish unicyclic graphs (those with a single cycle). The example they present is the net and the graph that we refer to---following the \emph{Information System on Graph Classes and Their Inclusions}~\cite{ISGCI} for lack of a better name---as $X_{169}$ (also shown in Figure~\ref{fig-Xequal}).

%
%
%
%

%\bibliographystyle{abbrv}
%\bibliography{bpp}

\end{document}